\newtheorem{Theorem}{Theorem}[section]
\newtheorem{Definition}[Theorem]{Definition}
\newtheorem{Proposition}[Theorem]{Proposition}
\newtheorem{Lemma}[Theorem]{Lemma}
\def\R{\mathbb R}
\def\N{\mathbb N}
\def\ds{\displaystyle}
\title{\bf Schauder theorems for Ornstein-Uhlenbeck equations in infinite dimension}\date{}
\author[S. Cerrai]{Sandra Cerrai}
\address{Department of Mathematics\\
University of Maryland\\ 
College Park, MD 20742, USA}
\email{cerrai@math.umd.edu}
\author[A. Lunardi]{Alessandra Lunardi}
\address{
Dipartimento di Scienze Matematiche, Fisiche e Informatiche\\
Universit\`a di Parma\\
Parco Area delle Scienze, 53/A\\
43124 Parma, Italy}
\email{alessandra.lunardi@unipr.it}
\subjclass[2010]{35R15, 47D07, 60J35}
\keywords{Infinite dimensional analysis, Schauder estimates}
\begin{document}

 \begin{abstract}  
We prove Schauder type estimates for stationary and evolution equations driven by the classical Ornstein-Uhlenbeck operator in a separable Banach space, endowed with a centered Gaussian measure. 
 \end{abstract}

 \maketitle
 
\section{Introduction}
 
Let $X$ be a separable Banach space, endowed with a centered Gaussian   measure $\gamma$, and let $H\subset X$ be the corresponding Cameron-Martin space.  In this context, an important differential operator that plays a central role in the Malliavin Calculus  is the classical Ornstein-Uhlenbeck operator, 
$${\mathcal L}u = \text{div}_{\gamma}\nabla_Hu, $$
where div$_{\gamma}$ is the Gaussian divergence and $\nabla_H$ is the gradient along $H$. It plays the role played by the Laplacian with respect to the Lebesgue measure in $\R^d$, being the operator associated to the quadratic Dirichlet form
$$(u,v)\mapsto \int_X \langle \nabla_Hu, \nabla_Hv\rangle_H\, d\gamma, \quad u, \;v\in W^{1,2}(X, \gamma). $$
The corresponding Markov semigroup is explicitly represented by
$$T(t)f(x) = \int_X f(e^{-t}x + \sqrt{1-e^{-2t}}y)\gamma (dy), \quad t>0, \; f\in C_b(X). $$
Its realization $T_p$ in $L^p(X, \gamma)$ is a contraction, strongly continuous semigroup  for every $p\in [1, +\infty)$, and it is 
analytic if $p>1$. In the latter case, the well known Meyer estimates imply that the domain of its infinitesimal generator $L_p$ coincides with the Sobolev space $W^{2,p}(X, \gamma)$. In particular, for every $\lambda >0$ and $f\in L^p(X, \gamma)$, the equation
\begin{equation}
\label{eq_risolvente}
\lambda u - {\mathcal L}u = f
\end{equation}
has a unique solution $u\in W^{2,p}(X, \gamma)$, and $\|u\|_{W^{2,p}(X, \gamma)} \leq C \|f\|_{L^p(X, \gamma)}$, with $C$ independent of $f$. See e.g. \cite[Ch. 5]{Boga} for a survey on Sobolev spaces with respect to Gaussian measures, and on the operators $L_p$. 

Here  we consider  a realization $L$ of ${\mathcal L}$  in the space  $C_b(X)$ of the continuous and bounded functions from $X$ to $\R$, whose resolvent $R(\lambda, L)$ is given, for $\lambda >0$, by 
$$R(\lambda, L)f (x) = \int_0^{\infty} e^{-\lambda t}T(t)f(x)\,dt, \quad f\in C_b(X). $$
The realizations of elliptic differential operators in spaces of continuous functions exhibit  typical difficulties. Even in finite dimension, the solution of \eqref{eq_risolvente} does not belong to  $C^2(\R^d)$ for general $f\in C_b(\R^d)$, while Schauder   theorems are available both for non-degenerate (\cite{DPL}) and for degenerate hypoelliptic (\cite{L}) Ornstein-Uhlenbeck operators. 

In our general setting we prove Schauder type regularity results for the  solution to  \eqref{eq_risolvente}, that are the H\"older counterpart of  the above mentioned maximal $L^p$ regularity results. The appropriate H\"older spaces (as well as the Sobolev spaces $W^{k,p}(X, \gamma)$) have to be chosen according to the structure of $\mathcal L$: indeed, it is well known that $T(t)$ and $R(\lambda, L)$ are smoothing operators   along the directions of the Cameron-Martin space $H$ only.  So, we use H\"older  spaces along $H$, defined as 
$$C^{\alpha}_H(X, Y):= \bigg\{f\in C_b(X, Y):\; [f]_{C^{\alpha}_H(X, Y)}:=\sup_{x\in X, \,h\in H\setminus\{0\}} \frac{\|f(x+h) - f(x)\|_Y}{\|h\|_H^{\alpha}}<+\infty \bigg\}, $$
$$\|f\|_{C^{\alpha}_H(X, Y)}:= \sup_{x\in X} \|f(x)\|_Y + [f]_{C^{\alpha}_H(X, Y)}, $$
for $\alpha\in (0,1)$ and for any Banach space $Y$. We prove that for every $f\in C^{\alpha}_H(X, \R)$ and for every $\lambda >0$, the unique solution $u\in D(L)$ of  \eqref{eq_risolvente} belongs to $C^{2+\alpha}_H(X, \R)$. This means that $u$ is twice continuously differentiable along $H$, it has bounded and continuous $H$-gradient $\nabla_Hu$ and $H$-Hessian operator $D^2_Hu$, with values respectively in $H$ and in the space of the bilinear quadratic forms  ${\mathcal L}^{(2)}(H)$, and $x\mapsto D^2_Hu(x)$ belongs to $C^{\alpha}_H(X, {\mathcal L}^{(2)}(H))$. Consequently, all the second order directional derivatives $\partial^2u/\partial h\partial k$ with $h$, $k\in H$ exist and belong to $C^{\alpha}_H(X, \R)$.

In the case that $f\in C_b(X)$ only, we prove that $u$  has bounded and continuous $H$-gradient $\nabla_Hu$, such that  
$$\sup_{x\in X, \,h\in H\setminus \{0\}} \frac{\|\nabla_Hu(x+2h)-2\nabla_Hu(x+h) + \nabla_Hu(x)\|_H}{\|h\|_H} <+\infty , $$
namely $\nabla_Hu$ satisfies a Zygmund condition along $H$. This is an infinite dimensional counterpart of the Zygmund regularity  of the gradients of solutions to elliptic differential equations in finite dimension. 
 
Schauder type regularity results are proved also  for the mild solution to  the Cauchy problem 
\begin{equation}
\label{Cauchy}\left\{ \begin{array}{l}
v_t(t,x)  = Lv(t,x)  + g(t,x), \quad t\in [0,T], \; x\in X, 
\\
\\
v(0, \cdot) = f, \end{array}\right. 
\end{equation}
namely for  the function 
\begin{equation}
\label{v}
v(t,x) = T(t)f(x) + \int_0^t T(t-s)g(s, \cdot)(x)ds, \quad  t\in [0,T], \; x\in X, 
\end{equation}
when $f\in C^{2+\alpha}_H(X, \R)$ and $g\in C_b([0,T]\times X; \R)$  such that $\sup_{t\in [0,T]} [g(t, \cdot)]_{C^{\alpha}_H(X; \R)} <+\infty$. 
However, while in finite dimension with non-degenerate $\gamma$ the function $v$ defined by \eqref{v} is a classical solution to \eqref{Cauchy}, in infinite dimension it is not differentiable with respect to $t$ in general, even if $g\equiv 0$. 

Our main interest  is  in the infinite dimensional case. However, if $X=\R^n$ the operator ${\mathcal L}$ reads as 
$${\mathcal L}u(x) = \text {Trace}\,[QD^2u(x)] - \langle x, \nabla u(x)\rangle$$
where $Q\geq 0$ is the covariance matrix of $\gamma$. If $Q>0$, namely if $\gamma$ is non-degenerate, our results are contained in \cite{DPL,L}. If $Q$ is not invertible the operator ${\mathcal L}$ is not hypoelliptic, and this paper provides new H\"older and Zygmund regularity results along the directions of the range of $Q$.

In infinite dimension, Schauder regularity results for elliptic equations driven by  Ornstein-Uhlenbeck operators 
 are already available in the case that  $X$ is a Hilbert space, $\gamma$ is non-degenerate, and the corresponding Ornstein-Uhlenbeck semigroup is smoothing in all directions (\cite{CDP,ABP,C}). Still in the case that $X$ is a Hilbert space and $\gamma$ is non-degenerate, Schauder regularity results for elliptic equations driven by the Gross Laplacian and some of its perturbations are also available (\cite{CDP1,ABP}).  See section \ref{section_biblio} for details.

\section{Notation and preliminaries}
 
Throughout the paper we use  notations, definitions and results of \cite{Boga} concerning Gaussian measures in Banach spaces. 

We consider a 
 separable Banach space $X$ endowed with a centered  Gaussian measure $\gamma$, and we denote by  $H$  the corresponding 
 Cameron-Martin space. It is isometric to the closure of $X^*$ in $L^2(X, \gamma)$, denoted by $X^*_{\gamma}$. The isometry $R_{\gamma}: X^*_{\gamma}\mapsto H $ is defined as follows: $R_{\gamma}f$ is the unique $y\in X$ such that $\int_X f(x) g(x) \gamma (dx) = g(y)$, for every $g\in X^*$. For every $h\in H$, $R_{\gamma}^{-1}h$ is usually denoted by $\hat{h}$. 
 
We recall the Cameron-Martin formula: for every $h\in H$, the translated measure $\gamma_h(B):=\gamma(B-h)$ is absolutely continuous with respect to $\gamma$, with density $\rho(x) = \exp{\hat{h}(x) -\|h\|^2_H/2}$. So, for every continuous and bounded $\varphi$ we have 
\begin{equation}
\label{CM}
\int_X \varphi(x+h)\gamma (dx) = \int_X \varphi (x)e^{\hat{h}(x) -\|h\|^2_H/2}\gamma(dx). 
\end{equation}
 We also  recall that for every $h\in H$, the function $\hat{h}$ is a Gaussian random variable with law ${\mathcal N}(0,\|h\|^2_H)$ in $\R$. Therefore, for every $p\in [1, +\infty)$ we have 
 \begin{equation}
 \label{legge}
 \| \hat{h} \|_{L^p(X, \gamma)} = \left( \frac{1}{\sqrt{2\pi} }\int_\R |\xi|^p \exp( -\xi^2/2)d\xi\right)^{1/p} \|h\|_H := k_p  \|h\|_H. 
 \end{equation}
 
A function $f:X\mapsto \R$ is called $H$-differentiable at $x\in X$ if there exists a (unique) linear bounded operator $\ell :H\mapsto \R$ such that 
$$\lim_{\|h\|_H \to 0} \frac{ f(x+h) - f(x) - \ell (h)}{\|h\|_H} =0. $$
We set $\ell := D_Hf(x)$. Since $H$ is a Hilbert space, there exists a unique $y\in H$ such that $D_Hf(x)( h) = \langle h, y\rangle _H$. Such $y$ is denoted by $\nabla_Hf(x)$. 

Since $H$ is continuously embedded in $X$, if $f$ is Frech\'et differentiable at $x$ it is also $H$-differentiable at $x$, and $f'(x)(h) = \langle \nabla_Hf(x), h\rangle_H$, for every $h\in H$. In particular, if $X$ is a Hilbert space,  $\gamma = {\mathcal N}(0,Q)$ is the centered Gaussian measure with covariance $Q$, and $\nabla f(x)$ is the gradient of $f$ at $x$, we have $\nabla_Hf(x) = Q\nabla f(x)$.

More generally, if $Y$ is a Banach space, a function $F:X\mapsto Y$ is called $H$-differentiable at $x\in X$ if there exists a linear bounded operator $L:H\mapsto Y$ such that 
$$Y-\lim_{\|h\|_H \to 0} \frac{ F(x+h) - F(x) - L(h)}{\|h\|_H} =0. $$
 $n$ times $H$-differentiable functions are defined by recurrence, in a canonical way.   Here we are interested in $n=2$, $3$. 
So, if $f$ is $H$-differentiable in $X$, we say that it is twice $H$-differentiable at $x$ if $D_H f :X\mapsto H'$ is differentiable at $x$, (equivalently, $\nabla_Hf :X\mapsto H$ is differentiable at $x$) and we 
define the Hessian operator $D^2_Hf(x)\in {\mathcal L}^{(2)}(H)$ (the space of the bounded bilinear forms from $H^2$ to $\R$), 
by $D^2_Hf(x)(k,h) := (Lh)(k)$, where $L$ is the operator in the definition, with $F(x)=D_Hf(x)$, $Y=H'$. Similarly, if $f$ is twice $H$-differentiable in $X$, we say that it is thrice $H$-differentiable at $x$ if $D^2_Hf : X\mapsto {\mathcal L}^{(2)}(H)$ is $H$-differentiable at $x$; in this case the third order derivative $D^3_Hf(x)\in {\mathcal L}^{(3)}(H)$ is defined as $D^3_Hf(x)(h,k,l) : = (Lh)(k,l)$, where $L$ is the operator in the definition, with now $F(x) = D^2_Hf(x)$, $Y=  {\mathcal L}^{(2)}(H)$.

\begin{Definition}
For $k\in \N$ we denote by $C^k_H(X)$  the subspace of $C_b(X)$ consisting of functions $k$ times $H$-differentiable at any point, with $D_H^j f$ continuous and bounded in 
${\mathcal L}^{(j)}(H)$ for $j\leq k$. $C^k_H(X)$ is endowed with the norm
$$\|f\|_{C^k_H(X)} := \sup_{x\in X}|f(x)| + \sum_{j=1}^k \sup_{x\in X} \| D^j_Hf(x)\|_{{\mathcal L}^{(j)}(H)}. $$
\end{Definition}

The Ornstein--Uhlenbeck semigroup is defined by 
\begin{equation}
\label{OU}
T(t) f(x) := \int_X f(e^{-t}x+ \sqrt{1-e^{-2t}}y) \gamma(dy), \quad t>0, \; f\in C_b(X).
\end{equation}
Then $T(t)$ maps $C_b(X)$ into itself for every $t>0$, and  
\begin{equation}
\label{stimasup}
\|T(t) f\|_{\infty} \leq \|f\|_{\infty}, \quad t>0, \; f\in C_b(X).
\end{equation}
Neverthless, $T(t)$ is
not  strongly continuous in $C_b(X)$, and not even in 
the subspace $BUC(X)$ of the bounded and uniformly continuous
functions.  Indeed, for  $f\in BUC(X)$ it is easy to see that  
\[
\lim_{t\to 0^+} \|T(t)f-f\|_{\infty} =0 \Longleftrightarrow \lim_{t\to 0^+} \|f(e^{-t}\cdot) - f\|_{\infty} =0.
\]
However, for every fixed $x\in X$ the function $t\mapsto T(t)f(x)$ is continuous in $[0, +\infty)$ by the Do\-minated Convergence Theorem. It follows that  for every $ \lambda >0$  the     linear operator $F(\lambda)$ defined by   
$$F(\lambda)f(x) := \int_0^{+\infty} e^{-\lambda t} T(t)f(x) \,dt, \quad \lambda >0, \; f\in C_b(X), \; x\in X, $$
belongs to $ {\mathcal L}(C_b(X))$ and it is  one to one. Moreover, since $T(t)$ is a semigroup, the family $\{ F(\lambda):\; \lambda >0\}$ satisfies
  the resolvent identity. Therefore there exists a linear  operator $L:D(L) \mapsto X$ such that $F(\lambda)= R(\lambda, L)$ for every $\lambda >0$. 

The operator $L$ is called {\em generator } of $T(t)$ in $C_b(X)$, although it is not an infinitesimal generator in the usual sense. 
So, as in the case of strongly continuous semigroups, we have
\begin{equation}
\label{risolvente}
(R(\lambda, L) f)(x) =  \int_0^{+\infty} e^{-\lambda t} T(t)f(x) \,dt, \quad \lambda >0, \; f\in C_b(X), \; x\in X, 
\end{equation}
and by \eqref{stimasup} we obtain
\begin{equation}
\label{stimasup_res}
\|R(\lambda, L) f\|_{\infty} \leq \|f\|_{\infty}, \quad \lambda>0, \; f\in C_b(X). 
\end{equation}

Let us recall that the realization $T_p(t)$ of $T(t)$ in $L^p(X, \gamma)$ is a strongly continuous, contraction,  analytic semigroup, for every $p\in (1, +\infty)$. The domain of its infinitesimal generator $L_p$ is equal to the Sobolev space $W^{2,p}(X, \gamma)$, and  the graph norm of $L_p$ is equivalent to the Sobolev norm. Moreover, 
$$L_p u = \text{div}_\gamma \nabla_Hu = \sum_{j=1}^{\infty} \left( \frac{\partial}{\partial h_j} - \hat{h}_j\right) \frac{\partial u}{\partial h_j}, $$
where div$_{\gamma}$ is the Gaussian divergence, $\{h_j:\; j\in \N\}$ is any orthonormal basis of $H$, and the series converges in $L^p(X, \gamma)$. See e.g. \cite[Ch. 5]{Boga}. 
If $X$ is a Hilbert space,  $\gamma$ is a non-degenerate centered Gaussian measure with covariance $Q$,  and $\{e_j:\; j\in \N\}$ is any orthonormal basis of $X$ consisting of eigenvectors of $Q$, $Qe_j = \lambda_j e_j$, then $\{\sqrt{\lambda_j} e_j:\; j\in \N\}$ is an orthonormal basis of $H$ and the 
 above series reads as 
$$L_p u(x)  =   \sum_{j=1}^{\infty} \bigg(\lambda_j \frac{\partial^2 u}{\partial e_j^2}(x) -  x_j \frac{\partial u}{\partial e_j}(x)\bigg), $$
where $x_j:= \langle x, e_j\rangle$. 

Using the characterizations $D(L_p) =W^{2,p}(X, \gamma)$ for $p>1$, we obtain a characterization of $D(L)$, as follows. 

\begin{Lemma}
$$D(L)= \{u\in \bigcap_{p>1}W^{2,p}(X, \gamma):\; u, \; {\mathcal L}u \in C_b(X)\} = \{u\in \bigcup_{p>1}W^{2,p}(X, \gamma):\; u, \; {\mathcal L}u \in C_b(X)\} . $$
Moreover, for every $u\in D(L)$, $Lu$ is a continuous and bounded version of $ {\rm div}_{\gamma} \nabla_Hu$. 
\end{Lemma}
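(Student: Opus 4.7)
The strategy is to establish the two nontrivial inclusions, since $\bigcap_{p>1}W^{2,p}\subseteq\bigcup_{p>1}W^{2,p}$ is tautological. The key preliminary step is a consistency between the resolvents $R(\lambda,L)$ and $R(\lambda,L_p)$: since $\gamma$ is a probability measure, $C_b(X)\subset L^p(X,\gamma)$, and both $T(t)$ and $T_p(t)$ are given by the same Mehler formula \eqref{OU}, so $T(t)f=T_p(t)f$ pointwise for every $f\in C_b(X)$. Using \eqref{stimasup} and Fubini, for every $\lambda>0$ and $f\in C_b(X)$, the continuous bounded function $R(\lambda,L)f$ defined by \eqref{risolvente} is a representative of the $L^p$-equivalence class $R(\lambda,L_p)f\in D(L_p)=W^{2,p}(X,\gamma)$ for every $p>1$.

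Granted this, take $u\in D(L)$ and set $f:=\lambda u-Lu\in C_b(X)$ for some $\lambda>0$, so that $u=R(\lambda,L)f$. By consistency, $u$ is a continuous bounded representative of $R(\lambda,L_p)f\in W^{2,p}(X,\gamma)$ for every $p>1$; hence $u\in\bigcap_{p>1}W^{2,p}(X,\gamma)$ and $L_pu=\lambda u-f=Lu$ in $L^p$, so $Lu$ is a continuous bounded version of $\text{div}_\gamma\nabla_Hu$. This gives the inclusion $D(L)\subseteq\{u\in\bigcap_{p>1}W^{2,p}(X,\gamma):u,\mathcal{L}u\in C_b(X)\}$ together with the final assertion. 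For the converse inclusion, take $u\in W^{2,p}(X,\gamma)$ for some $p>1$ whose equivalence class admits continuous bounded representatives, still denoted by $u$ and $\mathcal{L}u$, of $u$ itself and of $\text{div}_\gamma\nabla_Hu$. Put $f:=\lambda u-\mathcal{L}u\in C_b(X)$, so that $u=R(\lambda,L_p)f$ in $L^p$. By the consistency, $R(\lambda,L)f\in C_b(X)$ is also a continuous bounded representative of the same $L^p$-class, whence uniqueness of the continuous representative gives $u=R(\lambda,L)f$, i.e., $u\in D(L)$.

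The main obstacle is really the preliminary consistency step: one must carefully apply Fubini to commute the time integral defining $R(\lambda,L)$ with the Bochner integral defining $R(\lambda,L_p)$, using \eqref{stimasup} to ensure integrability against $e^{-\lambda t}$ uniformly. A secondary subtlety is the uniqueness of continuous bounded representatives of an $L^p$-class, which implicitly relies on $\gamma$ having full topological support in $X$; when $\text{supp}(\gamma)\subsetneq X$ the equalities in the lemma should be read modulo identification on the support, and the same proof applies verbatim.
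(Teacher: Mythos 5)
Your proof is correct and follows essentially the same route as the paper: identify $T(t)$ with $T_p(t)$ on $C_b(X)$, transfer this to the resolvents, and run both inclusions through the representation $u=R(\lambda,\cdot)f$ with $f=\lambda u-\mathcal{L}u$. You are in fact somewhat more careful than the paper, which silently uses both the Fubini commutation and the identification of the continuous representative that you flag explicitly.
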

\begin{proof}
For $u\in D(L)$ and $\lambda >0$ set $f:= \lambda u - Lu$, so that $u$ is given by \eqref{risolvente}. Since  $T_p(t)$ agrees with $T(t)$ on $C_b(X)$ for every $p>1$, we have $u= \int_0^{+\infty} e^{-\lambda t} T_p(t)f \,dt = R(\lambda, L_p)f$. Therefore, $u\in W^{2,p}(X, \gamma)$ for every $p>1$ and   $Lu = L_pu$, $\gamma$-a.e. So, $Lu$ is a continuous and bounded version of $L_pu =$ div$_{\gamma} \nabla_Hu$. 

Conversely, if $u\in W^{2,p}(X, \gamma)$ for some $p>1$ we have $u= \int_0^{+\infty} e^{-\lambda t} T_p(t)(\lambda u -L_pu ) \,dt$ for every $\lambda >0$. If  $u$, ${\mathcal L}u =L_pu \in C_b(X)$ we obtain $u = R(\lambda , L)f$, with $f=\lambda u - {\mathcal L}u $, and therefore $u\in D(L)$. 
\end{proof}


The following smoothing properties are easily shown. 

\begin{Proposition}
\label{Pr:C-C1} For every $f\in C_b(X)$ and $t>0$, $T(t)f$ is infinitely times $H$-differentiable at every $x\in X$. Setting
$$c(t):=   \frac{e^{-t}}{\sqrt{1-e^{-2t}}} , \quad t>0, $$
 we have
\begin{equation}
\label{funzionegradiente}
D_HT(t)f(x)(h) = \langle \nabla_H T(t)f(x), h\rangle _H = c(t) \int_X f(e^{-t}x + \sqrt{1-e^{-2t}}y) \hat{h}(y) \gamma(dy), 
\end{equation}
\begin{equation}
\label{funzionederseconde}
D^2_H T(t)f(x)(h,k)  =  c(t)^2  \int_X f(e^{-t}x + \sqrt{1-e^{-2t}}y)(  \hat{h}(y)\hat{k}(y) - \langle h, k\rangle_H) \gamma(dy), 
\end{equation}
\begin{equation}
\label{funzionederterze}
\begin{array}{lll}
D^3_H T(t)f(x)(h,k,l) & = & \ds   - c(t)^3  \int_X f(e^{-t}x + \sqrt{1-e^{-2t}}y)(  \hat{l}(y)  \langle h, k\rangle_H +  \hat{h}(y)  \langle k, l\rangle_H
+  \hat{k}(y)  \langle h, l\rangle_H) \gamma(dy)
\\
\\
& &  + c(t)^3  \int_X f(e^{-t}x + \sqrt{1-e^{-2t}}y) \hat{h}(y)\hat{k}(y) \hat{l}(y) \, \gamma(dy),  
\end{array}
\end{equation}
for every $h$, $k$, $l\in H$. The function $(t,x)\mapsto T(t)f(x)$ is continuous in $[0, +\infty)\times X$, and the functions $(t,x)\mapsto D_H^jT(t)f(x)$ ($j=1, 2, 3$) are continuos in $(0, +\infty)\times X$, with values in  
${\mathcal L}^{(j)}(X)$, respectively. Moreover, 
 for every $x\in X$ and $t>0$ we have
\begin{equation}
\label{stimagradienteH}
\begin{array}{ll}
(i) & |\nabla_H T(t)f(x)|_H \leq  c(t)   \|f\|_{\infty}, 
\\
\\
(ii) & \| D^2_H T(t)f(x)\|_{{\mathcal L}^{(2)}(H)} \leq  2c(t) ^2   \|f\|_{\infty}, 
\\
\\
(iii) & \| D^3_H T(t)f(x)\|_{{\mathcal L}^{(3)}(H)} \leq  (3 +k_3^3) c(t) ^3   \|f\|_{\infty}, 
\end{array}
\end{equation}
\end{Proposition}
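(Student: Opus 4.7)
The natural starting point is the Cameron--Martin formula \eqref{CM}. For fixed $t > 0$, $x \in X$, and $h \in H$, using the identity $e^{-t}h = \sqrt{1-e^{-2t}}\,c(t)h$ with $c(t)h \in H$, I would rewrite the integrand in \eqref{OU} as a translation of the auxiliary variable by $c(t)h$ and apply \eqref{CM} to obtain
$$T(t)f(x+h) = \int_X f\bigl(e^{-t}x + \sqrt{1-e^{-2t}}\,y\bigr)\exp\Bigl(c(t)\hat h(y) - \tfrac{c(t)^2}{2}\|h\|_H^2\Bigr)\gamma(dy).$$
For fixed $y$ this integrand is smooth as a function of $h \in H$, and all its $H$-derivatives are products of the exponential factor with polynomials in $\hat h(y)$ and $\|h\|_H$. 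Since \eqref{legge} guarantees $\hat h \in L^p(X, \gamma)$ for every $p < \infty$, dominated convergence allows me to interchange derivative and integral any finite number of times on a neighborhood of $h = 0$; this simultaneously proves the infinite $H$-differentiability of $T(t)f$ at $x$ and yields explicit integral representations for all the derivatives $D_H^j T(t)f(x)$.

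To extract the specific formulas \eqref{funzionegradiente}--\eqref{funzionederterze}, I would compute $\partial^j/(\partial s_1 \cdots \partial s_j)$ at $s = 0$ of $T(t)f(x + s_1 h + s_2 k + s_3 l)$. A direct Leibniz calculation, using that the mixed second partial of $-c(t)^2\|s_1h + s_2k + s_3l\|_H^2/2$ is the constant $-c(t)^2 \langle h, k\rangle_H$ and its third mixed partial vanishes, yields the integrands $c(t)\hat h(y)$, $c(t)^2(\hat h(y)\hat k(y) - \langle h, k\rangle_H)$, and $c(t)^3[\hat h\hat k\hat l - \hat h\langle k, l\rangle_H - \hat k\langle h, l\rangle_H - \hat l\langle h, k\rangle_H]$ at $s=0$, matching \eqref{funzionegradiente}, \eqref{funzionederseconde}, and \eqref{funzionederterze}. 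The bounds \eqref{stimagradienteH} then follow from H\"older's inequality in $L^p(X, \gamma)$ together with \eqref{legge} and the value $k_2 = 1$: (i) is Cauchy--Schwarz using $\|\hat h\|_{L^2(\gamma)} = \|h\|_H$; (ii) combines the triangle inequality with $\int_X |\hat h\hat k|\,d\gamma \leq \|h\|_H\|k\|_H$ and $|\langle h, k\rangle_H| \leq \|h\|_H\|k\|_H$ to produce the factor $2$; (iii) applies the same bound three times on the cross terms and H\"older with three factors to get $\|\hat h\hat k\hat l\|_{L^1(\gamma)} \leq k_3^3\|h\|_H\|k\|_H\|l\|_H$.

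For the continuity statements I would again invoke dominated convergence. Continuity of $(t, x) \mapsto T(t)f(x)$ on $[0, \infty)\times X$, including at $t = 0$, is immediate from \eqref{OU} because the integrand is continuous in $(t, x)$ for every fixed $y$ and is uniformly bounded by $\|f\|_\infty$. To upgrade pointwise convergence to convergence in operator norm in $\mathcal L^{(j)}(H)$ on $(0, \infty)\times X$, I would take differences of the explicit formulas and apply H\"older uniformly in $h, k, l$ of unit $H$-norm; for instance,
$$\sup_{\|h\|_H = 1}\bigl|D_H T(t)f(x)(h) - D_H T(t_0)f(x_0)(h)\bigr| \leq \bigl\|c(t)f_{t,x} - c(t_0)f_{t_0, x_0}\bigr\|_{L^2(X, \gamma)},$$
where $f_{t,x}(y) := f(e^{-t}x + \sqrt{1-e^{-2t}}y)$, and the right-hand side tends to $0$ by dominated convergence. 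The main point requiring care is the repeated differentiation under the integral --- specifically, producing a uniform-in-$h$ $L^p(\gamma)$ dominant for the derivative integrands --- together with the third-order bookkeeping; once this is in place, everything else is a routine consequence of \eqref{CM} and \eqref{legge}.
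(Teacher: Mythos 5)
Your proposal is correct and follows essentially the same route as the paper: apply the Cameron--Martin formula \eqref{CM} with shift $c(t)h$ to obtain the translated representation of $T(t)f(x+h)$, differentiate under the integral to get \eqref{funzionegradiente}--\eqref{funzionederterze}, derive \eqref{stimagradienteH} via H\"older's inequality and \eqref{legge} (with the same accounting for the constants $2$ and $3+k_3^3$), and obtain the continuity statements by dominated convergence applied to the representation formulae.
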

\begin{proof} 
Formulae \eqref{funzionegradiente}, \eqref{funzionederseconde}, \eqref{funzionederterze} are easily proved using the Cameron-Martin formula. For instance concerning \eqref{funzionegradiente}, using \eqref{CM} we get  
$$T(t)f(x+h) - T(t)f(x) = \int_X f(e^{-t}x + \sqrt{1-e^{-2t}}y) [ \exp ( c(t)\hat{h}(y) - c(t)^2 \|h\|^2_H/2 ) -1]\,\gamma(dy)$$
which yields  \eqref{funzionegradiente}. \eqref{funzionederseconde}, \eqref{funzionederterze} are proved in the same way. Estimates \eqref{stimagradienteH} are consequence of \eqref{funzionegradiente}, \eqref{funzionederseconde}, \eqref{funzionederterze} through the H\"older inequality and \eqref{legge} (in particular, the constant $k_3^3$ in the right hand side of \eqref{stimagradienteH} comes from estimating $\|\hat{h} \hat{k}  \hat{l}\|_{L^1(X, \gamma)} \leq 
\|\hat{h}\|_{L^3(X, \gamma)} \|\hat{k} \|_{L^3(X, \gamma)}\| \hat{l}\|_{L^3(X, \gamma)}$). 
Also the continuity of $(t,x)\mapsto T(t)f(x)$ and of $D_H^jT(t)f(x)$ for $j=1, 2, 3$ is a consequence of the respective representation formulae, through the Dominated Convergence Theorem. 
\end{proof}

 For functions in $C^1_H(X)$ the estimates in \eqref{stimagradienteH} may be improved. The proof is similar, and it is omitted.

\begin{Proposition}
\label{Pr:C1b}
For every $f\in C^1_H(X)$,    for any $t\geq 0$, and for every $x\in X$ we have
\begin{equation}
\label{derivataT(t)f}
\langle \nabla_H T(t)f(x), h \rangle_H =  e^{-t}\int_X\langle \nabla_H f(e^{-t}x+ \sqrt{1-e^{-2t}y}), h\rangle_H\, \gamma (dy), 
\end{equation}
\begin{equation}
\label{derivatasecondaT(t)f}
\langle D^2_H T(t)f(x)(h,k)   =  e^{-t}\int_X\langle \nabla_H f(e^{-t}x+ \sqrt{1-e^{-2t}y}), h\rangle_H\hat{k}(y) \, \gamma (dy), 
\end{equation}
\begin{equation}
\label{derivataterzaT(t)f}
\langle D^3_H T(t)f(x)(h,k,l)   =  e^{-t}\int_X\langle \nabla_H f(e^{-t}x+ \sqrt{1-e^{-2t}y}), h\rangle_H( \hat{k}(y)\hat{l}(y) - \langle k,l\rangle_H) \, \gamma (dy). 
\end{equation}
The function $(t,x)\mapsto \nabla_H T(t)f(x)$ is continuous in $[0, +\infty)\times X$ with values in $H$, and for every $x\in X$ and $t>0$ we have 
\begin{equation}
\label{stimederivate}
\begin{array}{ll}
(i) & \|\nabla_H T(t)f(x)\|_H \leq  \|\nabla_H f\|_{\infty}, 
\\
\\
(ii) & \| D^2_H T(t)f(x)\|_{{\mathcal L}^{(2)}(H)} \leq  c(t)   \|\nabla_Hf\|_{\infty}, 
\\
\\
(iii) & \| D^3_H T(t)f(x)\|_{{\mathcal L}^{(3)}(H)} \leq  2c(t) ^2   \|\nabla_Hf\|_{\infty}. 
\end{array}
\end{equation}
\end{Proposition}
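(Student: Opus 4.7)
The strategy mirrors Proposition \ref{Pr:C-C1}, but now exploits the extra smoothness of $f$ to transfer one $H$-derivative from $T(t)f$ onto $f$. This lowers the singularity at $t=0$ by one power of $c(t)$ and replaces $\|f\|_\infty$ by $\|\nabla_H f\|_\infty$ in each estimate.

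The starting point is \eqref{derivataT(t)f}. For $h\in H$ and $\tau\in\R$, the argument $e^{-t}x + \sqrt{1-e^{-2t}}y$ of $f$ in $T(t)f(x+\tau h)$ is shifted by $e^{-t}\tau h\in H$ relative to that in $T(t)f(x)$. Since $f\in C^1_H(X)$, the pointwise difference quotient
$$\tau^{-1}\bigl[f(e^{-t}x + \sqrt{1-e^{-2t}}y + e^{-t}\tau h) - f(e^{-t}x + \sqrt{1-e^{-2t}}y)\bigr]$$
converges for every $y$ to $e^{-t}\langle \nabla_H f(e^{-t}x + \sqrt{1-e^{-2t}}y), h\rangle_H$ and is uniformly bounded in $y,\tau$ by $e^{-t}\|\nabla_H f\|_\infty \|h\|_H$. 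Dominated Convergence yields \eqref{derivataT(t)f}, and estimate (i) is immediate.

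For the higher derivatives I would set $G_h(z):=\langle \nabla_H f(z),h\rangle_H$. By assumption $G_h\in C_b(X)$ with $\|G_h\|_\infty \leq \|\nabla_H f\|_\infty \|h\|_H$, and \eqref{derivataT(t)f} reads $\langle \nabla_H T(t)f(x),h\rangle_H = e^{-t}\,T(t)G_h(x)$. One then differentiates this identity in $x$ by applying Proposition \ref{Pr:C-C1} directly to $G_h$: formulas \eqref{funzionegradiente} and \eqref{funzionederseconde} applied to $G_h$ give \eqref{derivatasecondaT(t)f} and \eqref{derivataterzaT(t)f} (the overall prefactor is $e^{-t}c(t)$ and $e^{-t}c(t)^2$ respectively). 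Estimates (ii) and (iii) follow from H\"older's inequality together with \eqref{legge}: in (ii) one uses $\|\hat k\|_{L^1(X,\gamma)}\leq k_1\|k\|_H \leq \|k\|_H$; in (iii) one bounds $\|\hat k\hat l - \langle k,l\rangle_H\|_{L^1(X,\gamma)} \leq \|\hat k\|_{L^2}\|\hat l\|_{L^2} + \|k\|_H\|l\|_H = 2\|k\|_H\|l\|_H$ since $k_2=1$. The trivial inequality $e^{-t}c(t)^j\leq c(t)^j$ absorbs the extra $e^{-t}$.

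Finally, continuity of $(t,x)\mapsto \nabla_H T(t)f(x)$ on $[0,+\infty)\times X$ follows from \eqref{derivataT(t)f} by one more application of Dominated Convergence, using continuity of $\nabla_H f: X\to H$ and the uniform bound $\|\nabla_H f\|_\infty$; at $t=0$ the formula reduces to $\nabla_H f(x)$, as expected. The only real delicacy is the justification of differentiation under the integral in the first step; once \eqref{derivataT(t)f} is established, everything else is a clean reduction to Proposition \ref{Pr:C-C1} applied to $G_h$, plus standard $L^p$-norm computations for linear Gaussian functionals.
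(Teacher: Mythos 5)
Your proof is correct and follows essentially the route the paper intends: the paper omits its own proof of this proposition, stating only that it is ``similar'' to that of Proposition \ref{Pr:C-C1}, and your argument (differentiate under the integral via dominated convergence to get \eqref{derivataT(t)f}, then apply the formulas of Proposition \ref{Pr:C-C1} to $G_h=\langle\nabla_Hf(\cdot),h\rangle_H$ and estimate via H\"older and \eqref{legge}) is exactly that reduction. One useful observation: the prefactors $e^{-t}c(t)$ and $e^{-t}c(t)^2$ you obtain for the second and third derivatives are the correct ones --- the displayed formulas \eqref{derivatasecondaT(t)f} and \eqref{derivataterzaT(t)f} in the statement are missing the factors $c(t)$ and $c(t)^2$ (a typo, as confirmed by the bounds \eqref{stimederivate}(ii)--(iii) and by their use in Lemma \ref{Le:fund}), so your version should be taken as the intended one.
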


\section{H\"older spaces and Schauder type theorems}

We introduce a class of H\"older spaces that arise ``naturally" in  this setting. 

\begin{Definition}
If $Y$ is any Banach space and $\alpha\in (0,1)$, the space $C^{\alpha}_H(X, Y)$ is the subspace of $C_b(X, Y)$ consisting of the functions $F$ such that 
$$[F]_{\alpha}:= \sup_{h\in H\setminus\{0\}, \,x\in X} \frac{\| f(x+h) - f(x) \|_Y}{\|h\|_H^{\alpha}} <+\infty . $$
$C^{\alpha}_H(X, Y)$ is normed by 
$$\|F\|_{C^{\alpha}_H(X, Y)} := \|F\|_{\infty} + [F]_{\alpha}. $$
If $Y=\R$ the space $C^{\alpha}_H(X, \R)$ is denoted by $C^{\alpha}_H(X)$. Moreover, we denote by $C^{1+\alpha}_H(X)$, $C^{2+\alpha}_H(X)$ the subspaces  of $C^{1}_H(X)$, $C^{2}_H(X)$, consisting of functions $f$ such that $D_Hf \in C^{\alpha}_H(X, {\mathcal L} (H))$, $D^2_Hf \in C^{\alpha}_H(X, {\mathcal L}^{(2)}(H))$, respectively. They are endowed with the norms
$$\|f\|_{C^{1+\alpha}_H(X )} := \|f\|_{C^{1}_H(X)} + [D_Hf]_{\alpha} = \|f\|_{C^{1}_H(X)} + \sup_{h\in H\setminus\{0\}, \,x\in X} \frac{\| D_Hf(x+h) - D_Hf(x) \|_{{\mathcal L} (H)}} {\|h\|_H^{\alpha}} 
$$
$$\|f\|_{C^{2+\alpha}_H(X )} := \|f\|_{C^{2}_H(X)} + [D^2_Hf]_{\alpha} = \|f\|_{C^{2}_H(X)} + \sup_{h\in H\setminus\{0\}, \,x\in X} \frac{\| D^2_Hf(x+h) - D^2_Hf(x) \|_{{\mathcal L}^{(2)}(H)}} {\|h\|_H^{\alpha}} 
$$
\end{Definition}

 The behavior of the semigroup $T(t)$ in the space $C^{k+\alpha}_H(X)$, $k=0, 1, 2$,  is similar to the one in $C_b(X)$. Below, we just state   the properties that will be used in the sequel. 
 
 \begin{Lemma}
 \label{Le:T(t)holder}
$T(t)\in {\mathcal L}(C^{k+\alpha}_H(X))$ for  every $t>0$,  $k=0, 1, 2$,  $\alpha\in (0, 1)$, and there are $C_k>0$ such that 
\begin{equation}
\label{sgrkalpha}
\|T(t)\|_{ {\mathcal L}(C^{k+\alpha}_H(X ))} \leq  1, \quad t>0. 
\end{equation}
Moreover, we have 
\begin{equation}
\label{sgralpha}
[T(t)f]_{\alpha} \leq e^{-\alpha t}[f]_{\alpha}, \quad  t>0, \; f\in C^{\alpha}_H(X), 
  \end{equation}
\begin{equation}
\label{gradsgralpha}
  [D_HT(t)f]_{C^{\alpha}_H(X, H')} \leq e^{-\alpha t} c(t)  [f]_{\alpha}, \quad  t>0,  \; f\in C^{\alpha}_H(X), 
  \end{equation}
whereas 
\begin{equation}
\label{gradsgrzeroalpha}
  [D_HT(t)f]_{C^{\alpha}_H(X, H')} \leq 2e^{-\alpha t} c(t)^{1+\alpha}  \|f\|_{\infty}, \quad  t>0,  \; f\in C_b(X). 
  \end{equation}
 \end{Lemma}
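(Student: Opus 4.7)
The plan is to derive \eqref{sgrkalpha}--\eqref{sgralpha} directly from the semigroup definition, and to obtain \eqref{gradsgralpha}--\eqref{gradsgrzeroalpha} from the explicit derivative formulas of Proposition \ref{Pr:C-C1} together with an interpolation argument in the $C_b$ case.

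First, for \eqref{sgralpha} and the $k=0$ case of \eqref{sgrkalpha}, I would note that if $f\in C^\alpha_H(X)$ the identity
\[
T(t)f(x+h)-T(t)f(x) = \int_X\bigl[f(e^{-t}x+e^{-t}h+\sqrt{1-e^{-2t}}y) - f(e^{-t}x+\sqrt{1-e^{-2t}}y)\bigr]\gamma(dy)
\]
shows the integrand to be pointwise bounded by $[f]_\alpha\|e^{-t}h\|_H^\alpha = e^{-\alpha t}[f]_\alpha\|h\|_H^\alpha$, which gives \eqref{sgralpha}; combining this with $\|T(t)f\|_\infty\leq\|f\|_\infty$ yields \eqref{sgrkalpha} for $k=0$. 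The cases $k=1,2$ follow by applying the same argument to formulas \eqref{derivataT(t)f} and \eqref{derivatasecondaT(t)f}, where $\nabla_HT(t)f$ and $D^2_HT(t)f$ are written as $\gamma$-integrals of the corresponding derivatives of $f$; the same shift $x\mapsto x+e^{-t}h$ inside the integrand forces the $C^\alpha_H$-seminorms of $D^j_HT(t)f$ to inherit the contraction factor $e^{-\alpha t}$ from those of $D^j_Hf$.

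For \eqref{gradsgralpha}, I would take the difference in formula \eqref{funzionegradiente} to obtain
\[
[D_HT(t)f(x+k)-D_HT(t)f(x)](h) = c(t)\int_X\bigl[f(e^{-t}(x+k)+\sqrt{1-e^{-2t}}y) - f(e^{-t}x+\sqrt{1-e^{-2t}}y)\bigr]\hat h(y)\,\gamma(dy),
\]
and invoke the Cauchy--Schwarz inequality in $L^2(\gamma)$. The $L^2$-norm of the bracketed difference is controlled by the pointwise estimate $[f]_\alpha e^{-\alpha t}\|k\|_H^\alpha$, while $\|\hat h\|_{L^2(\gamma)}=\|h\|_H$ by \eqref{legge} with $p=2$; taking the supremum over unit $h\in H$ then yields precisely the stated bound $c(t)\,e^{-\alpha t}\,[f]_\alpha\,\|k\|_H^\alpha$.

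Finally, for \eqref{gradsgrzeroalpha}, when only $f\in C_b(X)$ is assumed, the strategy is to reduce to \eqref{gradsgralpha} via the semigroup law $T(t)=T(t/2)\circ T(t/2)$. Setting $g:=T(t/2)f$, the estimate \eqref{stimagradienteH}(i) provides $\|\nabla_Hg\|_\infty\leq c(t/2)\|f\|_\infty$, so $g$ is $H$-Lipschitz, and the elementary interpolation $|g(x+k)-g(x)|\leq(2\|g\|_\infty)^{1-\alpha}(\|\nabla_Hg\|_\infty\|k\|_H)^\alpha$ yields $[g]_\alpha\leq 2^{1-\alpha}c(t/2)^\alpha\|f\|_\infty$. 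Applying \eqref{gradsgralpha} to $g$ at time $t/2$ then produces a bound of order $2^{1-\alpha}\,e^{-\alpha t/2}\,c(t/2)^{1+\alpha}\,\|f\|_\infty$ for $[D_HT(t)f]_{C^\alpha_H}$. The main obstacle is the final bookkeeping: repackaging this as the clean expression $2e^{-\alpha t}c(t)^{1+\alpha}\|f\|_\infty$ requires the explicit identity $c(t/2)^2(1-e^{-t})=e^{-t}$ together with a careful balancing of the exponential gain $e^{-\alpha t/2}$ from the outer $T(t/2)$ against the singularity of $c(t/2)^{1+\alpha}$ as $t\to 0^+$, so that both the short-time blow-up and the long-time exponential decay agree with the target rate.
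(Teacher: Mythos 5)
Your arguments for \eqref{sgrkalpha}, \eqref{sgralpha} and \eqref{gradsgralpha} are correct and essentially identical to the paper's: the translation $x\mapsto x+h$ becomes $e^{-t}h$ inside the integrand, and for \eqref{gradsgralpha} you pair the H\"older increment of $f$ with $\hat h$ (the paper uses $\|\hat k\|_{L^1}\le\|\hat k\|_{L^2}=\|k\|_H$ where you use Cauchy--Schwarz directly; same outcome).

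The step that does not work is \eqref{gradsgrzeroalpha}. Your route through $T(t)=T(t/2)\circ T(t/2)$ produces the bound $2^{1-\alpha}e^{-\alpha t/2}c(t/2)^{1+\alpha}\|f\|_\infty$, and you defer to ``final bookkeeping'' the task of converting this into $2e^{-\alpha t}c(t)^{1+\alpha}\|f\|_\infty$. That bookkeeping cannot be done: since $c(t/2)/c(t)=e^{t/2}\sqrt{1+e^{-t}}$, the ratio of your bound to the target equals $2^{-\alpha}e^{(1+2\alpha)t/2}\bigl(1+e^{-t}\bigr)^{(1+\alpha)/2}$, which is $\ge 2^{(1-\alpha)/2}\ge 1$ at $t=0^+$ and grows exponentially in $t$. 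So your estimate is strictly weaker than \eqref{gradsgrzeroalpha} for every $t>0$ and loses a factor of order $e^{(1/2+\alpha)t}$ for large times; the identity $c(t/2)^2(1-e^{-t})=e^{-t}$ does not help. The paper's argument is both simpler and sharper: bound the single increment two ways, $\|D_HT(t)f(x+h)-D_HT(t)f(x)\|_{H'}\le 2c(t)\|f\|_\infty$ by \eqref{stimagradienteH}(i) and $\le 2c(t)^2\|h\|_H\|f\|_\infty$ by \eqref{stimagradienteH}(ii) via the mean value theorem, then take the geometric mean with exponents $1-\alpha$ and $\alpha$ to get $2c(t)^{1+\alpha}\|h\|_H^\alpha\|f\|_\infty$. (Note that even this yields $2c(t)^{1+\alpha}$ rather than $2e^{-\alpha t}c(t)^{1+\alpha}$; the extra decay factor in the displayed statement is not produced by either computation, but since $e^{-\alpha t}\le 1$ the bound $2c(t)^{1+\alpha}\|f\|_\infty$ is what is actually available and is all that is used later.) You should replace the semigroup-splitting step by this direct interpolation of the increment.
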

 \begin{proof}
 Let $t>0$ and $f\in C^{\alpha}_H(X)$.  For every $h\in H$ we have
 $$\begin{array}{lll}
|T(t)f(x+h) - T(t)f(x)| & = & \ds \bigg| \int_X[ f(e^{-t}(x+h)+ \sqrt{1-e^{-2t}}y) -  f(e^{-t}x+ \sqrt{1-e^{-2t}}y)] \gamma(dy) \bigg|
\\
\\
&  \leq & (e^{-t}\|h\|_H)^{\alpha}[f]_{\alpha}, 
\end{array}$$
which yields \eqref{sgralpha}.    \eqref{sgrkalpha} follows, for $k=0$. 

If $f\in C^{1+\alpha}_H(X)$,   $T(t)f \in C^1_H(X)$ by Proposition \ref{Pr:C1b}, and estimates \eqref{stimasup} and 
\eqref{stimederivate}(i) yield 
$$\|T(t)f\|_{C^1_H(X)} \leq \|f\|_{C^1_H(X)}. $$
By  \eqref{derivataT(t)f}, for every $t>0$, $x\in X$  we have
 $$D_HT(t)f(x) = e^{-t} \int_X D_H  f(e^{-t}x+ \sqrt{1-e^{-2t}}y) \gamma(dy), $$
 so that   for each $h$, $k\in H$ we have 
 $$\begin{array}{l}
 |(D_HT(t)f(x+h) - D_HT(t)f(x))(k)| = 
 \\
 \\
 = \ds e^{-t} \bigg| \int_X[ D_Hf(e^{-t}(x+h)+ \sqrt{1-e^{-2t}}y) -  D_Hf(e^{-t}x+ \sqrt{1-e^{-2t}}y)](k) \gamma(dy) \bigg| 
 \\
 \\
 \leq e^{-t}(e^{-t}\|h\|_H)^{\alpha}[D_Hf]_{C^{\alpha}_H(X, H')}\|k\|_H, 
 \end{array}$$
and  \eqref{sgrkalpha} follows,  for $k=1$. The statement for $k=2$ is proved  in the same way.  

Let us prove \eqref{gradsgralpha}. Let  $f\in C^{\alpha}_H(X)$. 
By \eqref{funzionegradiente}, for every $h$, $k\in H$ we have
$$\begin{array}{l}
 |(D_H T(t)f(x+h)- D_H T(t)f(x))(k) |
  \\
  \\
 =  \ds c(t) \bigg| \int_X [f(e^{-t}(x+h) + \sqrt{1-e^{-2t}}y)-f(e^{-t}x+ \sqrt{1-e^{-2t}}y)] \hat{k}(y) \gamma(dy)\bigg|
  \\
  \\
 \leq  \ds c(t)(e^{-t}\|h\|_H)^{\alpha}\|\hat{k}\|_{L^1(X, \gamma)} [f]_{\alpha}
 \end{array} $$
 and \eqref{gradsgralpha} follows, recalling that $\|\hat{k}\|_{L^1(X, \gamma)}\leq \|\hat{k}\|_{L^2(X, \gamma)} = \| k\|_H$. 
 
Estimate \eqref{gradsgrzeroalpha} 
follows combining \eqref{stimagradienteH}(i)-(ii): indeed, for every $t>0$, $x\in X$, $h\in H$ we have 
$$\|D_HT(t)f(x+h) - D_HT(t)f(x) \|_{H'} \leq 2 c(t) \|f\|_{\infty}$$
by \eqref{stimagradienteH}(i), and 
$$\|D_HT(t)f(x+h) - D_HT(t)f(x) \|_{H'} \leq 2 c(t)^2\|h\|_H  \|f\|_{\infty}$$
by \eqref{stimagradienteH}(ii). Therefore, 
$$\|D_HT(t)f(x+h) - D_HT(t)f(x) \|_{H'} \leq (2 c(t))^{1-\alpha} ( 2 c(t)^2\|h\|_H  )^{\alpha} \|f\|_{\infty}$$
and \eqref{gradsgrzeroalpha}  is proved. 
\end{proof}

 The key estimates in what follows are in the next lemma. 
 
 \begin{Lemma}
 \label{Le:fund}
 For every $\alpha\in (0,1)$ there is $C_{1,\alpha}>0$ such that 
 \begin{equation}
 \label{stimagradientealpha}
 \|\nabla_H T(t)f(x)\|_H \leq \frac{C_{1,\alpha}}{t^{(1-\alpha)/2}}\| f \|_{C^{\alpha}_H(X)}, \quad t>0, \; f\in C^{\alpha}_H(X), \;x\in X. 
 \end{equation}
 Consequently, there are $C_{2,\alpha}$, $C_{3,\alpha}>0$ such that 
 \begin{equation}
 \label{stimaderivatealpha}
\begin{array}{lll}
(i) &  \|D^2_H T(t)f(x)\|_{{\mathcal L}^{(2)}(H)}  \leq \ds \frac{C_{2,\alpha}}{t^{1-\alpha/2}}\| f \|_{C^{\alpha}_H(X)}, &  t>0, \; f\in C^{\alpha}_H(X), \;x\in X,
\\
\\
(ii)&  \|D^3_H T(t)f(x)\|_{{\mathcal L}^{(3)}(H)}  \leq \ds \frac{C_{3,\alpha}}{t^{3/2-\alpha/2}}\| f \|_{C^{\alpha}_H(X)}, & t>0, \; f\in C^{\alpha}_H(X), \;x\in X. 
\end{array}
 \end{equation}
 \end{Lemma}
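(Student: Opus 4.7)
The plan is to derive \eqref{stimagradientealpha} by manipulating the representation \eqref{funzionegradiente} so that the $C^\alpha_H$-regularity of $f$ can be brought to bear. Fix $h\in H\setminus\{0\}$, set $\sigma:=\sqrt{1-e^{-2t}}$, and introduce the $H$-valued measurable map
$$P_h y:=\frac{\hat h(y)}{\|h\|_H^2}\,h,\qquad y\in X.$$
Using that $\int_X\hat h(y)\ell(y)\gamma(dy)=\ell(h)$ for $\ell\in X^*$ and $\|\hat h\|^2_{L^2(X,\gamma)}=\|h\|_H^2$, the pair $(\hat h(y),\ell(y-P_hy))$ is jointly Gaussian and uncorrelated for every $\ell\in X^*$; hence $\hat h(y)$ is $\gamma$-independent of the $X$-valued random variable $y-P_hy$. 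In particular $f(e^{-t}x+\sigma(y-P_hy))$ is $\gamma$-independent of $\hat h(y)$ and its integral against $\hat h(y)$ vanishes, so subtracting this null quantity from \eqref{funzionegradiente} produces
$$\langle\nabla_H T(t)f(x),h\rangle_H=c(t)\int_X\bigl[f(e^{-t}x+\sigma y)-f(e^{-t}x+\sigma(y-P_hy))\bigr]\hat h(y)\,\gamma(dy).$$

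The two arguments of $f$ in the bracket now differ exactly by $\sigma P_hy\in H$, of $H$-norm $\sigma|\hat h(y)|/\|h\|_H$, so the $C^\alpha_H$-bound gives the pointwise estimate
$$\bigl|f(e^{-t}x+\sigma y)-f(e^{-t}x+\sigma(y-P_hy))\bigr|\le[f]_\alpha\sigma^\alpha\|h\|_H^{-\alpha}|\hat h(y)|^\alpha.$$
Multiplying by $|\hat h(y)|$, integrating, and using \eqref{legge} with $p=\alpha+1$ to evaluate $\int_X|\hat h|^{\alpha+1}\,d\gamma=k_{\alpha+1}^{\alpha+1}\|h\|_H^{\alpha+1}$ yields
$$|\langle\nabla_H T(t)f(x),h\rangle_H|\le k_{\alpha+1}^{\alpha+1}\,c(t)\sigma^\alpha[f]_\alpha\|h\|_H.$$
Since $c(t)\sigma^\alpha=e^{-t}(1-e^{-2t})^{(\alpha-1)/2}\le C_\alpha/t^{(1-\alpha)/2}$ uniformly on $(0,\infty)$, taking the supremum over unit $h\in H$ gives \eqref{stimagradientealpha}.

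The bounds \eqref{stimaderivatealpha} I plan to obtain by a semigroup bootstrap. Writing $T(t)=T(t/2)T(t/2)$, the function $g:=T(t/2)f$ lies in $C^1_H(X)$ by Proposition \ref{Pr:C-C1}, so the estimates (ii)--(iii) of \eqref{stimederivate} applied to $g$ read
$$\|D^2_H T(t)f\|_{\mathcal L^{(2)}(H)}\le c(t/2)\|\nabla_H T(t/2)f\|_\infty,\qquad \|D^3_H T(t)f\|_{\mathcal L^{(3)}(H)}\le 2c(t/2)^2\|\nabla_H T(t/2)f\|_\infty.$$
Substituting \eqref{stimagradientealpha} at time $t/2$ to bound $\|\nabla_H T(t/2)f\|_\infty$ and using $c(t/2)\le Ct^{-1/2}$, the powers of $t$ compound to the claimed $-(1-\alpha/2)$ and $-(3/2-\alpha/2)$ respectively.

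The main obstacle is finding the right replacement for the Gaussian integration-by-parts identity $\int f\hat h\,d\gamma=\int D_hf\,d\gamma$, which is unavailable for $f\in C^\alpha_H$: one must encode the missing pointwise derivative as a finite $H$-increment $f(\,\cdot\,)-f(\,\cdot\,-\sigma P_hy)$, and the independence of $\hat h(y)$ from $y-P_hy$ under $\gamma$ is precisely what makes this substitution lossless on the ``non-$H$'' component of $y$. Once this splitting is in place, the remaining verification is a routine calculation with the Gaussian moments $k_p$ and the asymptotic behavior of $c(t)$.
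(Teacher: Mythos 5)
Your proof is correct, but for the key estimate \eqref{stimagradientealpha} it takes a genuinely different route from the paper. The paper's argument is an interpolation-type one: it compares $\langle \nabla_H T(t)f(x),h\rangle_H$ with the difference quotient $(T(t)f(x+sh)-T(t)f(x))/s$, bounds the discrepancy via the H\"older seminorm of $\nabla_H T(t)f$ from \eqref{gradsgralpha} and the quotient itself via \eqref{sgralpha}, and then optimizes over the step $s$ (choosing $s=t^{1/2}/\|h\|_H$). You instead work directly on the representation \eqref{funzionegradiente}, subtracting the null term $\int_X f(e^{-t}x+\sigma(y-P_hy))\hat h(y)\,\gamma(dy)$, which vanishes because $\hat h$ and $y\mapsto y-P_hy$ are uncorrelated jointly Gaussian, hence independent (your verification of this, and of the measurability needed to factor the expectation, is sound). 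This reduces everything to the H\"older increment of $f$ along the $H$-vector $\sigma P_hy$ and the moment identity \eqref{legge} with $p=1+\alpha$; the resulting constant $k_{1+\alpha}^{1+\alpha}\sup_{t>0}t^{(1-\alpha)/2}e^{-t}(1-e^{-2t})^{(\alpha-1)/2}$ is explicit, and the argument is more self-contained since it bypasses Lemma \ref{Le:T(t)holder} entirely. What the paper's route buys is that it uses only the mapping properties of $T(t)$ on H\"older spaces (no disintegration of $\gamma$), which is the pattern reused elsewhere in the paper; what yours buys is a one-line identity that exposes exactly where the $H$-regularity of $f$ enters and generalizes readily to other Mehler-type semigroups. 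Your derivation of \eqref{stimaderivatealpha} via the splitting $T(t)=T(t/2)T(t/2)$ together with \eqref{stimederivate}(ii)--(iii) is exactly the paper's.
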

 \begin{proof}
 Let $t>0$, $f\in C^{\alpha}_H(X)$, $h\in H\setminus \{0\}$. For every $s>0 $ we have 
 $$\begin{array}{lll}
 |\langle \nabla_H T(t)f(x), h\rangle _H| & \leq & \ds  \left| \langle \nabla_H T(t)f(x), h\rangle _H- \frac{T(t)f(x+sh) - T(t)f(x)}{s}\right| 
 \\
 \\
 & & \ds + \left|  \frac{T(t)f(x+sh) - T(t)f(x)}{s}\right| 
 \\
 \\
&  =: & I_1(s)+I_2(s). \end{array}$$
 Using \eqref{gradsgralpha} we get
 $$\begin{array}{lll}
|  I_1(s) | & = & \ds  \bigg| \frac{1}{s} \int_0^s \bigg( \langle \nabla_H T(t)f(x+ \sigma h), h\rangle _H - \langle \nabla_H T(t)f(x), h\rangle _H\bigg) d\sigma  \bigg| 
\\
\\
&\leq &  \ds \frac{1}{s} \int_0^s  c(t) \sigma^{\alpha} \|h\|_H^{\alpha +1}  [f]_{\alpha} d\sigma = \frac{ 1}{\alpha +1}c(t)  s^{\alpha} [f]_{\alpha}, 
\end{array}$$
while using \eqref{sgralpha} we get
$$|  I_2(s) | \leq s^{\alpha -1}  \|h\|_H^{\alpha }  [f]_{\alpha} . $$
Choosing now $s=  t^{1/2}/ \|h\|_H$ we obtain
$$|\langle \nabla_H T(t)f(x), h\rangle _H| \leq \bigg( \frac{ 1}{\alpha +1}c(t)  t^{\alpha/2}  +  t^{(\alpha-1)/2}\bigg)  \|h\|_H  [f]_{\alpha}, $$
and this yields \eqref{stimagradientealpha}. 

To prove \eqref{stimaderivatealpha}  it is sufficient to split $D^2_H T(t)f = D^2_H T(t/2) T(t/2)f$, $D^3_H T(t)f = D^3_H T(t/2) T(t/2)f$, and to use estimates  \eqref{stimagradientealpha} and \eqref{stimederivate}(ii) and (iii).
\end{proof}

\begin{Theorem}   
\label{Schauderell}       
Let $\lambda>0$, $f\in C^{\alpha}_H(X)$ with $0<\alpha <1$. Then the unique solution to
$$\lambda u - Lu =f$$
belongs to $C^{2+\alpha}_H(X)$, and    there is $C = C(\lambda, \alpha)>0$ such that 
\begin{equation}
\label{Schauder}
\|u\|_{C^{2+\alpha}_H(X)} \leq C \|f\|_{   C^{\alpha}_H(X)}. 
\end{equation}
\end{Theorem}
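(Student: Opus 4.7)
The plan is to represent $u$ via the resolvent formula \eqref{risolvente},
$$u(x) = \int_0^{+\infty} e^{-\lambda t} T(t)f(x)\, dt,$$
and to harvest the derivative bounds of Lemma \ref{Le:fund} by differentiating under the integral sign. The sup-norm bound $\|u\|_{\infty} \leq \|f\|_{\infty}/\lambda$ is immediate from \eqref{stimasup_res}.

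For the $H$-derivatives, the estimates in Lemma \ref{Le:fund},
$$\|\nabla_H T(t)f(x)\|_H \leq C_{1,\alpha}\, t^{-(1-\alpha)/2}\|f\|_{C^{\alpha}_H(X)}, \qquad \|D^2_H T(t)f(x)\|_{{\mathcal L}^{(2)}(H)} \leq C_{2,\alpha}\, t^{-1+\alpha/2}\|f\|_{C^{\alpha}_H(X)},$$
are integrable against $e^{-\lambda t}\,dt$ on $(0,+\infty)$ since $\alpha>0$. A standard dominated-convergence argument, using the continuity in $x$ of $\nabla_H T(t)f$ and $D^2_H T(t)f$ asserted in Proposition \ref{Pr:C-C1}, then gives
$$\nabla_H u(x) = \int_0^{+\infty} e^{-\lambda t}\nabla_H T(t)f(x)\, dt, \qquad D^2_H u(x) = \int_0^{+\infty} e^{-\lambda t} D^2_H T(t)f(x)\, dt,$$
together with continuity in $x$ and sup bounds proportional to $\|f\|_{C^{\alpha}_H(X)}$.

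The heart of the matter is the $\alpha$-H\"older estimate for $D^2_H u$ along $H$. For $h \in H \setminus \{0\}$ I split at the natural scale $t_0 := \|h\|_H^2$:
$$D^2_H u(x+h) - D^2_H u(x) = \int_0^{t_0} e^{-\lambda t}[D^2_H T(t)f(x+h) - D^2_H T(t)f(x)]\, dt + \int_{t_0}^{+\infty} (\cdots)\, dt =: I_1+I_2.$$
For $I_1$, the triangle inequality combined with \eqref{stimaderivatealpha}(i) gives
$$\|I_1\|_{{\mathcal L}^{(2)}(H)} \leq 2C_{2,\alpha}\|f\|_{C^{\alpha}_H(X)} \int_0^{t_0} t^{-1+\alpha/2}\,dt \leq C\|h\|_H^{\alpha}\|f\|_{C^{\alpha}_H(X)}.$$
For $I_2$, I would write $D^2_H T(t)f(x+h) - D^2_H T(t)f(x) = \int_0^1 D^3_H T(t)f(x+sh)(h,\cdot,\cdot)\, ds$ and use \eqref{stimaderivatealpha}(ii) to obtain
$$\|I_2\|_{{\mathcal L}^{(2)}(H)} \leq C_{3,\alpha}\|h\|_H\|f\|_{C^{\alpha}_H(X)}\int_{t_0}^{+\infty} t^{-(3-\alpha)/2}\, dt \leq C'\|h\|_H\cdot t_0^{-(1-\alpha)/2}\|f\|_{C^{\alpha}_H(X)} = C'\|h\|_H^{\alpha}\|f\|_{C^{\alpha}_H(X)}.$$
Adding the two contributions and taking the supremum over $x \in X$ and $h \in H \setminus \{0\}$ gives $[D^2_H u]_{\alpha} \leq C\|f\|_{C^{\alpha}_H(X)}$; the choice $t_0 = \|h\|_H^2$ is the unique scale that balances the two resulting powers of $\|h\|_H$.

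I do not anticipate any substantial obstacle beyond bookkeeping: once Lemma \ref{Le:fund} is in hand, the scheme is the classical Laplace-transform / dyadic-splitting argument familiar from the finite-dimensional Schauder theory of \cite{DPL}. The only point that needs some care is justifying differentiation under the integral for the ${\mathcal L}^{(2)}(H)$-valued map $x \mapsto D^2_H u(x)$, which reduces to dominated convergence for the difference quotients of $D_H T(t)f$ pointwise in $x$; the continuity statements in Proposition \ref{Pr:C-C1} together with the quantitative bounds in Lemma \ref{Le:fund} supply precisely what is needed.
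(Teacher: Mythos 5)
Your proposal is correct and follows essentially the same route as the paper's proof: representation of $u$ via the resolvent formula, differentiation under the integral justified by Proposition \ref{Pr:C-C1} and Lemma \ref{Le:fund}, and the splitting of $D^2_Hu(x+h)-D^2_Hu(x)$ at the scale $t_0=\|h\|_H^2$, with the triangle inequality plus \eqref{stimaderivatealpha}(i) on $[0,t_0]$ and the third-derivative representation plus \eqref{stimaderivatealpha}(ii) on $[t_0,\infty)$. The constants you obtain, $4C_{2,\alpha}/\alpha$ and $2C_{3,\alpha}/(1-\alpha)$, coincide with those in the paper.
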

\begin{proof}       
Recalling  that $u$ is given by the representation formula \eqref{risolvente} it is not difficult to see that $u\in C^{2 }_H(X)$, and that 
\begin{equation}
\label{gradu}
\nabla_Hu(x) =  \int_0^{+\infty} e^{-\lambda t} \nabla_HT(t)f(x) \,dt, 
\end{equation}
\begin{equation}
\label{D^2u}
D^2_Hu(x) =  \int_0^{+\infty} e^{-\lambda t} D^2_HT(t)f(x) \,dt. 
\end{equation}
Notice that the right-hand sides of \eqref{gradu} and \eqref{D^2u} are meaningful, since $t\mapsto  \nabla_HT(t)f(x)$, $t\mapsto D^2_HT(t)f(x)$, 
are continuous for $t>0$ with values in $H$, ${\mathcal L}^{(2)}(H)$, respectively, 
by Proposition \ref{Pr:C-C1}, and their norms are bounded by $C_{1,\alpha}  t^{-1/2+\alpha/2}\| f \|_{C^{\alpha}_H(X)}$ , $C_{2,\alpha}  t^{-1+\alpha/2}\| f \|_{C^{\alpha}_H(X)}$, respectively, by Lemma \ref{Le:fund}.  
Then, \eqref{gradu} and \eqref{D^2u} follow in a standard way. They yield that $\nabla_Hu$, $D^2_Hu$ are continuous and bounded, with 
\begin{equation}
\label{Schauder0}  
\begin{array}{l}
\|  \nabla_Hu(x)\|_H \leq C_{1,\alpha}\lambda^{-1/2 -\alpha/2}\Gamma(1/2+\alpha/2)\|f\|_{C^{\alpha}_H(X)},
\\
\\
 \| D^2_Hu(x)\|_{{\mathcal L}^{(2)}(H)} \leq C_{2,\alpha}\lambda^{  -\alpha/2}\Gamma( \alpha/2)\|f\|_{C^{\alpha}_H(X)}, 
 \end{array}
\end{equation}
for every $x$, where $\Gamma (\theta) = \int_0^{\infty} e^{-t}t^{\theta -1}dt$ is the Euler function, and the constants $C_{1,\alpha}$, $C_{2,\alpha}$ are given by \eqref{stimaderivatealpha}.  

To prove that $D^2_Hu\in C^{\alpha}_H(X, {\mathcal L}^{(2)}(H))$ we use an interpolation argument. For every  $x\in X$ and $h\in H$   we split 
$D^2_Hu(x+h) - D^2_Hu(x)$ as $a(x+h) - a(x) + b(x+h) - b(x)$, where
\begin{equation}
\label{a,b}
a(y): =  \int_0^{\|h\|^2_H} e^{-\lambda t} D^2_HT(t)f(y) \,dt, \quad b(y): =  \int_{\|h\|^2_H}^{\infty} e^{-\lambda t} D^2_HT(t)f(y) \,dt. 
\end{equation}
Then, 
$$\|a(x+h) - a(x)\|_{{\mathcal L}^{(2)}(H)} \leq  \int_0^{\|h\|^2_H} e^{-\lambda t} \| D^2_HT(t)f(x+h) -  D^2_HT(t)f(x)\|_{{\mathcal L}^{(2)}(H)} dt, $$
where, for every $t>0$, 
$$ \| D^2_HT(t)f(x+h) -  D^2_HT(t)f(x)\|_{{\mathcal L}^{(2)}(H)} \leq 2 \sup_{y\in X}\|D^2_HT(t)f(y)\|_{{\mathcal L}^{(2)}(H)}\leq 2C_{2, \alpha}t^{-1+\alpha/2}\|f\|_{C^{\alpha}_H(X)}, $$
by \eqref{stimaderivatealpha}(i). Therefore, 
$$\|a(x+h) - a(x)\|_{{\mathcal L}^{(2)}(H)} \leq \frac{4C_{2,\alpha}}{\alpha} \|f\|_{C^{\alpha}_H(X)}\|h\|_H^{\alpha}. $$
Moreover, 
$$\|b(x+h) - b(x)\|_{{\mathcal L}^{(2)}(H)} \leq  \int_{\|h\|^2_H}^{\infty} e^{-\lambda t} \| D^2_HT(t)f(x+h) -  D^2_HT(t)f(x)\|_{{\mathcal L}^{(2)}(H)} dt, $$
where, for every $t>0$,  
$$\begin{array}{lll}
 \| D^2_HT(t)f(x+h) -  D^2_HT(t)f(x)\|_{{\mathcal L}^{(2)}(H)} & = &  \ds  \| \int_0^1 D^3_HT(t)f(x+\sigma h)(h, \cdot, \cdot) d\sigma\|_{{\mathcal L}^{(2)}(H)}
\\
\\
& \leq  &  C_{3,\alpha}t^{-3/2 +\alpha/2}\|f\|_{C^{\alpha}_H(X)}\|h\|_H, 
\end{array}$$
by \eqref{stimaderivatealpha}(ii). Therefore, 
$$\|b(x+h) - b(x)\|_{{\mathcal L}^{(2)}(H)} \leq \frac{2C_{3,\alpha}}{1-\alpha} \|f\|_{C^{\alpha}_H(X)}\|h\|_H^{\alpha}. $$
Summing up we obtain that $D^2_Hu$ is $H$-H\"older continuous and 
$$[D^2_Hu]_{C^{ \alpha}_H(X, {\mathcal L}^{(2)}(H))} \leq  \left( \frac{4C_{2,\alpha}}{\alpha} +  \frac{2C_{3,\alpha}}{1-\alpha} \right)  \|f\|_{C^{\alpha}_H(X)}. $$
Such estimate and \eqref{stimasup_res}, \eqref{Schauder0} yield  \eqref{Schauder}. \end{proof}

The procedure of Theorem \ref{Schauderell}   fails for $\alpha =0$ from the very beginning, since the (optimal) estimate   $\|D^2T(t)f\|_{{\mathcal L}^{(2)}(H)}$ $\leq ct^{-1}\|f\|_{\infty}$ is not enough to guarantee that  the right hand side of \eqref{D^2u} is  meaningful for general $f\in C_b(X)$. This is not due to our technique, but to the general lack of maximal regularity results for elliptic differential equations in spaces of continuous functions:
even in finite dimension it is   known that the domain of Ornstein-Uhlenbeck operators (as well as the domains of the Laplacian and of other second order elliptic differential operators)  in $C_b(\R^d)$ is not contained in $C^2(\R^d)$, for $d\geq 2$.

Of course, estimate \eqref{gradsgrzeroalpha} 
and the procedure  of Theorem \ref{Schauderell} give, for $u = R(\lambda, L) f$, 
$$\nabla_H u \in C^{\theta}_H(X, H), \quad \|\nabla_H u\|_{C^{\theta}_H(X, H)}\leq K \|f\|_{\infty}, $$
for every $\theta\in (0, 1)$, with $K= K(\lambda, \theta)$ independent of $f$. However, $K(\lambda, \theta)$ blows up as $\theta $ goes to $1$. 

Still, a modification of Theorem \ref{Schauderell}  gives an embedding of the domain of $L$ that is similar to known embeddings in the finite dimensional case. 
To this aim we have to introduce Zygmund spaces along $H$, as follows.

\begin{Definition}
If $Y$ is any Banach space, we denote by $Z_H(X, Y)$ the set of continuous and bounded functions $F:X\mapsto Y$ such that 
\begin{equation}
\label{condZygmund}
[F]_{Z_H(X, Y)} := \sup_{x\in X, \,h\in H\setminus \{0\}}\frac{\| F(x+2h)-2F(x+h) + F(x)\|_Y}{\|h\|_H} <+\infty . 
\end{equation}
$Z_H(X, Y)$ is normed by 
$$\|F\|_{Z_H(X, Y)} :=  \sup_{x\in X}\|F(x)\|_Y + [F]_{Z_H(X, Y)} . $$
\end{Definition}

It is easy to see that continuous and bounded $H$-Lipschitz functions from $X$ to $Y$ belong to $Z_H(X, Y)$. Even in the one dimensional case (with $X=Y=H=\R$), there are continuous and bounded functions satisfying condition \eqref{condZygmund} that are not locally Lipschitz continuous. 
          
\begin{Theorem}   
\label{Th:Zygmund}       
Let $\lambda>0$, $f\in C_{b}(X)$. Then the unique solution to
$$\lambda u - Lu =f$$
satisfies  $\nabla_Hu \in Z_H(X, H)$. 
Moreover  there is $C  >0$ such that 
\begin{equation}
\label{Zygmund}
\|\nabla_Hu\|_{Z_H(X, H)}  \leq C \|f\|_{ \infty}. 
\end{equation}
\end{Theorem}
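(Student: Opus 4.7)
The plan is to follow the blueprint of Theorem \ref{Schauderell}, with the second-order Zygmund difference playing the role of the H\"older difference and one extra order of $H$-differentiation compensating for the lost regularity of $f$. First I would observe that $\nabla_Hu(x) = \int_0^\infty e^{-\lambda t}\nabla_H T(t)f(x)\,dt$ is a well-defined Bochner integral in $H$ (and in particular $\|\nabla_Hu\|_\infty \le C_\lambda\,\|f\|_\infty$), since \eqref{stimagradienteH}(i) gives $\|\nabla_H T(t)f(x)\|_H \le c(t)\|f\|_\infty$ and $\int_0^\infty e^{-\lambda t}c(t)\,dt$ is finite, because $c(t)\le C(2t)^{-1/2}$ for $t$ near $0$ and $c(t)\le Ce^{-t}$ for $t\ge 1$.

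Fix $x\in X$ and $h\in H\setminus\{0\}$ with $\|h\|_H\le 1$, set $\tau:=\|h\|_H^2$, and denote
$$\Delta(t) := \nabla_H T(t)f(x+2h)-2\nabla_H T(t)f(x+h)+\nabla_H T(t)f(x) \in H.$$
I would split the second difference of $\nabla_Hu$ at the parabolic scale $\tau$,
$$\nabla_Hu(x+2h)-2\nabla_Hu(x+h)+\nabla_Hu(x) = \int_0^\tau e^{-\lambda t}\Delta(t)\,dt + \int_\tau^\infty e^{-\lambda t}\Delta(t)\,dt,$$
mirroring the decomposition \eqref{a,b}. On the short-time part I would just use the triangle inequality and \eqref{stimagradienteH}(i) to get $\|\Delta(t)\|_H\le 4c(t)\|f\|_\infty$; since $-\arcsin(e^{-t})$ is a primitive of $c(t)$, one checks $\int_0^\tau c(t)\,dt \le \sqrt{2\tau}=\sqrt{2}\,\|h\|_H$, so this part is controlled by $C\|f\|_\infty\|h\|_H$.

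For the long-time part I would introduce the auxiliary $H$-valued function $v(\sigma) := \nabla_H T(t)f(x+\sigma h)$, $\sigma\in[0,2]$. An iterated chain rule produces $\langle v''(\sigma),k\rangle_H = D^3_H T(t)f(x+\sigma h)(h,h,k)$ for every $k\in H$, whence $\|v''(\sigma)\|_H \le \|D^3_H T(t)f(x+\sigma h)\|_{{\mathcal L}^{(3)}(H)}\|h\|_H^2\le(3+k_3^3)c(t)^3\|f\|_\infty\|h\|_H^2$ by \eqref{stimagradienteH}(iii). Writing the Zygmund second difference as the Newton--Leibniz identity
$$\Delta(t) = v(2)-2v(1)+v(0) = \int_0^1\!\int_0^1 v''(s+\sigma)\,d\sigma\,ds$$
then yields $\|\Delta(t)\|_H\le(3+k_3^3)c(t)^3\|f\|_\infty\|h\|_H^2$. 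Splitting at $t=1$ and using $c(t)^3\le C(2t)^{-3/2}$ for $t\le 1$ together with $c(t)^3\le Ce^{-3t}$ for $t\ge 1$, one gets $\int_\tau^\infty e^{-\lambda t}c(t)^3\,dt \le C(1+\tau^{-1/2})\le 2C\|h\|_H^{-1}$, so the long-time contribution is also bounded by $C\|f\|_\infty\|h\|_H$. When $\|h\|_H>1$, the trivial estimate $\|\nabla_Hu(x+2h)-2\nabla_Hu(x+h)+\nabla_Hu(x)\|_H \le 4\|\nabla_Hu\|_\infty \le C\|f\|_\infty\le C\|f\|_\infty\|h\|_H$ closes the argument.

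The main technical obstacle is the representation of $\Delta(t)$ as the iterated integral above together with the identification $\langle v''(\sigma),k\rangle_H=D^3_H T(t)f(x+\sigma h)(h,h,k)$: one needs the $H$-valued map $\sigma\mapsto v(\sigma)$ to be genuinely twice continuously differentiable in the Bochner sense, so that the trilinear bound \eqref{stimagradienteH}(iii) can be transferred to the $H$-norm of $v''(\sigma)$. Once this regularity has been checked directly from the explicit formulas in Proposition \ref{Pr:C-C1}, the rest is the interpolation split of Theorem \ref{Schauderell} carried out at the parabolic scale $\tau=\|h\|_H^2$.
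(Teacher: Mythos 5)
Your proposal is correct and follows essentially the same route as the paper: the same split of the integral at the parabolic scale $\|h\|_H^2$, with \eqref{stimagradienteH}(i) controlling the short-time piece and the third-derivative bound \eqref{stimagradienteH}(iii) controlling the long-time piece (the paper writes the second difference as a difference of two integrals of $D^2_HT(t)f$ rather than as your double integral of $v''$, but these are the same computation). The extra case distinction at $\|h\|_H=1$ is harmless but unnecessary, since the bounds $c(t)\le c_0t^{-1/2}$ and $c(t)^3\le c_0^3t^{-3/2}$ already yield the claim for all $h$.
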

\begin{proof}  We already know that  $u\in C^{1+\theta}_H(X)$ for every $\theta \in (0,1)$ by the above considerations; in particular $\nabla_Hu$ is continuous and bounded. 

To prove that $\nabla_Hu \in Z_H(X, H)$, for every $h\in H$ we consider the functions $a$ and $b$ defined in \eqref{a,b}. Using  \eqref{stimagradienteH}(i) we get, for every $x\in X$,  
$$\begin{array}{l}
\| a(x+2h) - 2 a(x+h) + a(x)\|_H 
\\
\\
\ds \leq  \int_0^{\|h\|^2_H} e^{-\lambda t} \|\nabla_HT(t)f(x+2h) - 2 \nabla_HT(t)f(x+h) + \nabla_HT(t)f(x )\|_H dt
\\
\\
\ds \leq 4 \int_0^{\|h\|^2_H}  e^{-\lambda t}  c(t) \|f\|_{\infty} dt, 
\end{array}$$
 and setting $c_0:= \sup_{t>0}  t^{1/2} c(t)$ we obtain 
$$\| a(x+2h) - 2 a(x+h) + a(x)\|_H \leq 2c_0 \|f\|_{\infty} \|h\|_H, \quad x\in X. $$
From the obvious equalities 
$$\langle \nabla_HT(t)f(x+2h) - \nabla_HT(t)f(x+h), k\rangle_H = \int_0^1 D^2_HT(t)f(x+ (1+\sigma)h)(h, k)\,d\sigma,$$
$$\langle \nabla_HT(t)f(x+h) - \nabla_HT(t)f(x), k\rangle_H = \int_0^1 D^2_HT(t)f(x+  \sigma h)(h, k)\,d\sigma, \quad k\in H, \; x\in X, $$
we obtain, using   \eqref{stimagradienteH}(iii)
$$\begin{array}{l}
\ds   |\langle \nabla_HT(t)f(x+2h) - 2  \nabla_HT(t)f(x+h)  +  \nabla_HT(t)f(x), k\rangle _H| 
\\
\\
\ds =  \left| \int_0^1 (D^2_HT(t)f(x+ (1+\sigma)h)- D^2_HT(t)f(x+  \sigma h))(h,k) d\sigma\right| 
 \\
 \\
\ds    \leq \sup_{y\in X} \|D^3_HT(t)f(y)\|_{{\mathcal L}^{(3)}(H)} \|h\|^2_H\|k\|_H \leq (3 +k_3^3) c(t)^3  \|f\|_{\infty}  \|h\|^2_H \|k\|_H,  
\end{array}$$
so that, for every $x\in X$, 
$$\|\nabla_HT(t)f(x+2h) - 2  \nabla_HT(t)f(x+h)  +  \nabla_HT(t)f(x)\|_H \leq c_1^3 t^{-3/2} \|f\|_{\infty}  \|h\|^2_H, \quad t>0, $$
with $c_1= (3 +k_3^3) c_0^3$, 
and therefore
$$\begin{array}{l}
\| b(x+2h) - 2 b(x+h) + b(x)\|_H 
\\
\\
\ds  \leq \int_{\|h\|^2}^{\infty}  e^{-\lambda t} \|\nabla_HT(t)f(x+2h) - 2 \nabla_HT(t)f(x+h) + \nabla_HT(t)f(x )\|_H dt
\\
\\
\leq \ds  c_1 \int_{\|h\|^2}^{\infty}  e^{-\lambda t} t^{-3/2} dt \, \|f\|_{\infty}  \|h\|^2_H 
\\
\\
\leq 2c_1 \|f\|_{\infty}  \|h\|_H . 
\end{array}$$
Summing up, we obtain 
$$\| \nabla_H u(x+2h) - 2\nabla_Hu(x+h) + \nabla_H u(x)\|_H \leq (2c_0 + 2c_1)\|f\|_{\infty} \|h\|_H, $$
and the statement follows. 
 \end{proof}

A similar procedure yields maximal H\"older regularity results for the mild solutions to  evolution problems such as \eqref{Cauchy}, 
namely for  the functions given by \eqref{v}, 
for suitable $f$ and $g$. Precisely, we consider the function spaces defined as follows. 

\begin{Definition}
Let $Y$ be any Banach space. For $\alpha\in (0,1)$ we denote by $C^{0,\alpha}_H([0,T]\times X;Y)$  the space of the functions $g\in C_b([0,T]\times X; Y)$ such that $g(t, \cdot)\in C^{\alpha}_H(X; Y)$ for every $t\in [0, T]$, and 
$$\|g\|_{C^{0,\alpha}_H([0,T]\times X; Y)} := \sup_{t\in [0,T]}\|g(t, \cdot)\|_{C^{\alpha}_H(X; Y)} <+\infty. $$
If $Y=\R$ we set $C^{0,\alpha}_H([0,T]\times X;\R) =  C^{0,\alpha}_H([0,T]\times X )$. Moreover, we denote by $C^{0,2+\alpha}_H([0,T]\times X )$ the subspace of $C_b([0,T]\times X)$ consisting of the functions $g$ such that $g(t, \cdot )\in C^{2+\alpha}_H(X)$ for every $t\in [0, T]$, and 
$$\|g\|_{C^{0,2+\alpha}_H([0,T]\times X )} := \sup_{t\in [0,T]}\|g(t, \cdot)\|_{C^{2+\alpha}_H(X )} <+\infty. $$
\end{Definition}

\begin{Theorem}
\label{Schauderpara}
Let $f\in C^{2+\alpha}_H(X)$, $g\in C^{0,\alpha}_H([0,T]\times X)$ with $\alpha \in (0, 1)$, and let $v$ be defined  by \eqref{v}.  Then  $v\in C^{0,2+\alpha}_H([0,T]\times X )$, and there is $C= C(T)>0$, independent of $f$ and $g$, such that 
\begin{equation}
\label{Schauderevol}
\|v\|_{C^{0,2+\alpha}_H([0,T]\times X )} \leq C( \|f\|_{C^{2+\alpha}_H(X)} + \|g\|_{C^{0,\alpha}_H([0,T]\times X )} ). 
\end{equation}
\end{Theorem}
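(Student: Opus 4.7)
The plan is to decompose $v = v_1 + v_2$ with $v_1(t,x):=T(t)f(x)$ and $v_2(t,x):= \int_0^t T(t-s)g(s,\cdot)(x)\,ds$, and to estimate each piece separately. The contribution of $v_1$ is essentially given for free: since $f \in C^{2+\alpha}_H(X)$, Lemma \ref{Le:T(t)holder} with $k=2$ yields $\|T(t)f\|_{C^{2+\alpha}_H(X)} \leq \|f\|_{C^{2+\alpha}_H(X)}$ for every $t \in [0,T]$, so $v_1 \in C^{0,2+\alpha}_H([0,T]\times X)$ with norm controlled by $\|f\|_{C^{2+\alpha}_H(X)}$. All the work is in $v_2$, which we treat by mimicking the proof of Theorem \ref{Schauderell}.

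First, I would justify that for each $t\in[0,T]$ the function $x\mapsto v_2(t,x)$ is twice $H$-differentiable with
\begin{equation*}
\nabla_H v_2(t,x) = \int_0^t \nabla_H T(t-s)g(s,\cdot)(x)\,ds,\qquad D^2_H v_2(t,x) = \int_0^t D^2_H T(t-s)g(s,\cdot)(x)\,ds,
\end{equation*}
using Lemma \ref{Le:fund} to see that the integrands are continuous on $[0,t)\times X$ with values in $H$ and $\mathcal{L}^{(2)}(H)$, and with norms bounded by $C_{1,\alpha}(t-s)^{-(1-\alpha)/2}\|g(s,\cdot)\|_{C^{\alpha}_H(X)}$ and $C_{2,\alpha}(t-s)^{-1+\alpha/2}\|g(s,\cdot)\|_{C^{\alpha}_H(X)}$ respectively. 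Since both exponents exceed $-1$, the integrals converge and yield $\sup_{t\in[0,T]}\|\nabla_H v_2(t,\cdot)\|_\infty$ and $\sup_{t\in[0,T]}\|D^2_H v_2(t,\cdot)\|_\infty$ bounded by $C(T,\alpha)\|g\|_{C^{0,\alpha}_H([0,T]\times X)}$.

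The main step is the $H$-H\"older estimate on $D^2_H v_2(t,\cdot)$. Fix $t \in (0,T]$, $x\in X$, $h\in H\setminus\{0\}$, set $\delta:=\min(\|h\|_H^2,t)$, and split, in analogy with \eqref{a,b},
\begin{equation*}
a(y):= \int_{t-\delta}^t D^2_H T(t-s)g(s,\cdot)(y)\,ds,\qquad b(y):= \int_0^{t-\delta} D^2_H T(t-s)g(s,\cdot)(y)\,ds.
\end{equation*}
For $a(x+h)-a(x)$ I would bound the integrand brutally by $2C_{2,\alpha}(t-s)^{-1+\alpha/2}\|g\|_{C^{0,\alpha}_H}$ and integrate, producing $\frac{4C_{2,\alpha}}{\alpha}\|g\|_{C^{0,\alpha}_H}\delta^{\alpha/2}\leq \frac{4C_{2,\alpha}}{\alpha}\|g\|_{C^{0,\alpha}_H}\|h\|_H^\alpha$. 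For $b(x+h)-b(x)$ (only present when $\delta=\|h\|_H^2<t$), I would write the inner difference via the mean value theorem as $\int_0^1 D^3_H T(t-s)g(s,\cdot)(x+\sigma h)(h,\cdot,\cdot)\,d\sigma$, apply estimate \eqref{stimaderivatealpha}(ii) to get a bound of the form $C_{3,\alpha}(t-s)^{-3/2+\alpha/2}\|g\|_{C^{0,\alpha}_H}\|h\|_H$, and integrate from $0$ to $t-\|h\|_H^2$; this produces $\frac{2C_{3,\alpha}}{1-\alpha}\|g\|_{C^{0,\alpha}_H}\|h\|_H^\alpha$ exactly as in Theorem \ref{Schauderell}. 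Summing, the $H$-H\"older seminorm of $D^2_H v_2(t,\cdot)$ is controlled uniformly in $t$ by $\|g\|_{C^{0,\alpha}_H([0,T]\times X)}$.

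The most delicate point, which is however already built into the splitting above, is the degeneracy at $s=t$ in the convolution: one cannot simply differentiate under the integral because $\|D^2_H T(t-s)g(s,\cdot)\|_\infty$ blows up as $s\uparrow t$. The cure is exactly the interpolation between \eqref{stimaderivatealpha}(i) and \eqref{stimaderivatealpha}(ii) through the $\delta=\|h\|_H^2$ split, which works precisely because $-1+\alpha/2>-1$ and $-3/2+\alpha/2<-1$. Combining the $v_1$ and $v_2$ estimates, and adding the $\sup$-norm bound coming from \eqref{stimasup}, yields the desired inequality \eqref{Schauderevol}.
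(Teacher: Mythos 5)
Your proposal is correct in its main line and follows essentially the same route as the paper: the $T(t)f$ term is absorbed by Lemma \ref{Le:T(t)holder}, the convolution term is differentiated under the integral using the singular bounds of Lemma \ref{Le:fund}, and the $H$-H\"older seminorm of $D^2_H v_2(t,\cdot)$ is obtained by the same two-piece splitting at $\min\{t,\|h\|_H^2\}$ used in Theorem \ref{Schauderell}; your two contributions $\frac{4C_{2,\alpha}}{\alpha}\|h\|_H^{\alpha}$ and $\frac{2C_{3,\alpha}}{1-\alpha}\|h\|_H^{\alpha}$ coincide with the paper's.

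The one item you omit is continuity in the pair $(t,x)$. Membership in $C^{0,2+\alpha}_H([0,T]\times X)$ requires at least $v\in C_b([0,T]\times X)$, and the paper actually devotes about half of its proof to the stronger fact that $(t,x)\mapsto D^2_Hv_0(t,x)$ is jointly continuous: for $t>t_0$ it splits $D^2_Hv_0(t,x)-D^2_Hv_0(t_0,x_0)$ into an integral over $[0,t_0]$, treated via the representation formula \eqref{funzionederseconde} and the Dominated Convergence Theorem with the integrable majorant $2C_{2,\alpha}s^{-1+\alpha/2}\sup_{r}\|g(r,\cdot)\|_{C^{\alpha}_H(X)}$, plus an integral over $[t_0,t]$ which vanishes because $s^{-1+\alpha/2}$ is integrable at $0$. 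Your estimates give uniform bounds and $H$-H\"older continuity in $x$ for each fixed $t$, but say nothing about continuity in $t$; you should add this verification, at least for $v$ itself (as the definition demands) and preferably for $D^2_Hv$ as the authors do. It is not difficult, but it is a separate argument from the interpolation splitting and does not follow from the bounds you have written down.
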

\begin{proof}
We already know that $(t,x)\mapsto T(t)f(x)$ is in $C^{0,2+\alpha}_H([0,T]\times X )$, by Lemma \ref{Le:T(t)holder}. So, we consider the function
$$v_0(t,x):= \int_0^t T(s)g(t-s, \cdot)(x)ds, \quad t\in [0,T], \; x\in X. $$
The same arguments used in the proof of Theorem  \ref{Schauderell} show that 
 $v(t, \cdot)\in C^2_H(X)$ for every $t\in [0,T]$,   that 
$$D^2_Hv_0(t,x) = \int_0^t D^2_HT(s)g(t-s, \cdot)(x)ds, \quad t\in [0,T], \; x\in X, $$
and that there is $C= C(T)>0$, independent of $g$, such that 
$$\|v_0(t, \cdot)\|_{ C^2_H(X)} \leq C \|g\|_{C^{0,\alpha}_H([0,T]\times X )} . $$
Let us prove that $D^2_Hv_0$ is continuous at any $(t_0, x_0)$. 
If $t>t_0$, $x\in X$,  we split
\begin{equation}
\label{splitting}
\begin{array}{l}
\| D^2_Hv_0(t,x)- D^2_Hv_0(t_0,x_0) \|_{{\mathcal L}^{(2)}(H)}    \leq 
\\
\\  \ds \leq  \int_0^{t_0} \|D^2_HT(s)g(t-s, \cdot)(x) - D^2_HT(s)g(t_0-s, \cdot)(x_0))\|_{{\mathcal L}^{(2)}(H)}ds 
\\
\\
 \hspace{4mm} \ds + \int_{t_0}^t \|D^2_HT( s)g(t-s, \cdot)(x)\|_{{\mathcal L}^{(2)}(H)}ds 
\\
\\
 =:    I_1(t,x) + I_2(t,x). 
\end{array}
\end{equation}
Estimate \eqref{stimaderivatealpha}(i) yields
$$I_2(t,x) \leq \int_{t_0}^t \frac{C_{2,\alpha}}{s^{1-\alpha/2}} ds \sup_{0\leq r\leq T}\|g(r, \cdot)\|_{C^{\alpha}_H(X)}, $$
so that $\lim_{t\to t_0^+, x\to x_0} I_2(t,x) =0$. Concerning $I_1(t,x)$, 
for every $s\in [0,t_0]$ and $h$, $k\in H$, formulae \eqref{funzionederseconde} and  \eqref{legge} yield 
$$\begin{array}{l}
|(D^2_HT(s)g(t-s, \cdot)(x) - D^2_HT(s)g(t_0-s, \cdot)(x_0))(h,k)| \leq 
\\
\\
\leq  \ds c(s)^2 \bigg( \int_X |g(t-s, e^{-s}x + \sqrt{1-e^{-2s}}y) - g(t_0-s, e^{-s}x_0 + \sqrt{1-e^{-2s}}y)|^2 \gamma (dy)\bigg)^{1/2} \cdot 
\\
\\
\hspace{4mm} \cdot  \| \hat{h} \hat{k} - \langle h, k\rangle_H\|_{L^2(X, \gamma)}
\\
\\
\leq \ds c(s)^2 \bigg( \int_X |g(t-s, e^{-s}x + \sqrt{1-e^{-2s}}y) - g(t_0-s, e^{-s}x_0 + \sqrt{1-e^{-2s}}y)|^2 \gamma (dy)\bigg)^{1/2} 
\cdot 
\\
\\
\hspace{4mm} \cdot  (k_4^2 +1)\|h\|_H\|k\|_H
\end{array}$$
and since $g$ is continuous and bounded, by the Dominated Convergence Theorem we get 
$$\lim_{t\to t_0, x\to x_0}\|D^2_HT(s)g(t-s, \cdot)(x) - D^2_HT(s)g(t_0-s, \cdot)(x_0)\|_{{\mathcal L}^{(2)}(H)} =0. $$
Moreover, estimate \eqref{stimaderivatealpha}(i) yields
 $$\|D^2_HT(s)g(t-s, \cdot)(x) - D^2_HT(s)g(t_0-s, \cdot)(x_0))\|_{{\mathcal L}^{(2)}(H)} \leq \frac{2C_{2, \alpha}}{s^{1-\alpha/2}}\sup_{0\leq r\leq T}
 \|g(r, \cdot)\|_{C^{\alpha}_H(X)}, \quad 0<s<t. $$
 Therefore, still by the Dominated Convergence Theorem, $\lim_{t\to t_0^+, x\to x_0} I_1(t,x) =0$.  Summing up, we get $\lim_{t\to t_0^+, x\to x_0}
 D^2v_0(t,x) = D^2v_0(t_0, x_0)$. 
 If $t<t_0$, changing the roles of $t$ and $t_0$ in the splitting \eqref{splitting}, we obtain $\lim_{t\to t_0^-, x\to x_0}
 D^2v_0(t,x) = D^2v_0(t_0, x_0)$, and continuity of $D^2v_0$ is proved. 

To prove that $ D^2v_0(t, \cdot) \in C^{\alpha}_H(X, {\mathcal L}^{(2)}(H))$ for every $t\in [0,T]$ we argue as in  Theorem  \ref{Schauderell}, 
namely we split $ D^2v_0(t, \cdot)(x+h) -  D^2v_0(t, \cdot) = a(x+h) -a(x) + b(x+h)-b(x)$, where now
$$a(y) = \int_0^{\min \{t, \|h\|^2\}} D^2_H T(s)g(t-s, \cdot)(y)\,ds, \quad b(y) = \int_{\min \{t, \|h\|^2\}}^t  D^2_H T(s)g(t-s, \cdot)(y)\,ds, \quad y\in X, $$
and we proceed as in the proof  of Theorem  \ref{Schauderell}, to get 
$$[D^2v_0(t, \cdot) ]_{C^{ \alpha}_H(X, {\mathcal L}^{(2)}(H))} \leq   \left( \frac{4C_{2,\alpha}}{\alpha} +  \frac{2C_{3,\alpha}}{1-\alpha} \right) \sup_{r\in [0,T] }\|g(r, \cdot)\|_{C^{\alpha}_H(X)}. $$
\end{proof}          

\section{Open problems and bibliographical remarks}
\label{section_biblio}

Although many of our proofs rely on typical arguments from interpolation theory, interpolation spaces are not explicitly mentioned.  
If $X=\R^d$, Schauder theorems for non-degenerate Ornstein-Uhlenbeck operators were first proved in \cite{DPL}, relying on other interpolation techniques. 
It was shown that for every $f\in C^{\alpha}_b(\R^d)$ the function $R(\lambda, L)f$ defined in  \eqref{risolvente}  is the unique bounded classical solution to \eqref{eq_risolvente}, that  its second order derivatives  belong to the interpolation space 
$$(C_b(\R^d), D(L))_{\alpha/2, \infty} = \{ f\in C_b(\R^d):\; \sup_{t>0} t^{-\alpha/2}\|T(t)f-f\|_{\infty} <+\infty\} ,  $$
where $T(t)$ is the corresponding Ornstein-Uhlenbeck semigroup, 
and the latter space was characterized as 
$$ \{f\in C^{\alpha}_b(\R^d): \; \text{sup}_{t>0} t^{-\alpha/2}\|f(e^{-t}\cdot ) - f\|_{\infty} <+\infty\}. $$
A similar characterization is open in infinite dimension. 
Even the simpler  characterization
\begin{equation}
\label{interp}
(C_b(X), C^1_H(X))_{\alpha, \infty} = C^{\alpha}_H(X), \quad 0<\alpha <1, 
\end{equation}
is not clear in general Banach spaces. In the next lemma we only prove  embeddings, through (by now) standard methods. 

\begin{Lemma}
For every $\alpha\in (0,1)$ we have 
$$\begin{array}{ll}
(i) & (C_b(X), C^1_H(X))_{\alpha, \infty} \subset  C^{\alpha}_H(X),  
\\
\\
(ii) & (C_b(X), D(L))_{\alpha/2, \infty} \subset C^{\alpha}_H(X) . 
\end{array}$$
\end{Lemma}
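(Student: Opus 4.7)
My plan is to use the definition of the K-functional directly. For $f \in (C_b(X), C^1_H(X))_{\alpha,\infty}$ with interpolation norm $M$, and any $h \in H \setminus \{0\}$ and $\varepsilon>0$, I would invoke the definition to find a decomposition $f = a + b$ (depending on $\|h\|_H$) with $a \in C_b(X)$, $b \in C^1_H(X)$, and
\[
\|a\|_\infty + \|h\|_H\,\|b\|_{C^1_H(X)} \;\le\; K(\|h\|_H, f) + \varepsilon \;\le\; M\|h\|_H^{\alpha} + \varepsilon.
\]
Then for every $x \in X$, using the Mean Value Theorem along $H$ for $b$,
\[
|f(x+h)-f(x)| \le 2\|a\|_\infty + \|h\|_H\,\|\nabla_H b\|_\infty \le 2(M\|h\|_H^{\alpha} + \varepsilon).
\]
Letting $\varepsilon \to 0$ yields $[f]_{C^\alpha_H(X)} \le 2M$, and combining with the trivial embedding $(C_b,C^1_H)_{\alpha,\infty} \hookrightarrow C_b$ gives the claim.

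\textbf{Part (ii).} For $f \in (C_b(X),D(L))_{\alpha/2,\infty}$ with norm $M$, I would for each $t>0$ choose a decomposition $f = a_t + b_t$ realizing the K-functional up to a factor 2, so that $\|a_t\|_\infty + t\|b_t\|_{D(L)} \le 2M t^{\alpha/2}$. The first step is to establish the semigroup estimate
\[
\|T(s)f - f\|_\infty \le \|T(s)a_t - a_t\|_\infty + \Bigl\|\int_0^s T(\sigma) L b_t\,d\sigma\Bigr\|_\infty \le 2\|a_t\|_\infty + s\|b_t\|_{D(L)},
\]
which upon setting $t=s$ gives $\|T(s)f-f\|_\infty \le 4Ms^{\alpha/2}$ uniformly.

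Next, using the identity
\[
f(x+h)+f(x-h)-2f(x) = [T(t)f(x+h)+T(t)f(x-h)-2T(t)f(x)] + \bigl(\text{terms bounded by } 4\|T(t)f-f\|_\infty\bigr)
\]
together with the second-order smoothing \eqref{stimagradienteH}(ii), giving $\|D^2_H T(t)f\|_\infty \le Ct^{-1}\|f\|_\infty$, a Taylor expansion produces
\[
|f(x+h)+f(x-h)-2f(x)| \le 16M t^{\alpha/2} + C\|f\|_\infty\|h\|_H^2 t^{-1}.
\]
Optimizing in $t$ (choosing $t \sim \|h\|_H^{4/(\alpha+2)}$) yields a Zygmund-type estimate with exponent $\gamma_0 = 2\alpha/(\alpha+2) \in (0,\alpha)$, and by the Zygmund characterization this gives $f \in C^{\gamma_0}_H(X)$ with explicit norm bound.

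Finally, I would bootstrap using Lemma~\ref{Le:fund}: once $f \in C^{\gamma}_H(X)$, the sharper estimate $\|D^2_H T(t)f\|_\infty \le C t^{(\gamma-2)/2}\|f\|_{C^\gamma_H(X)}$ replaces the generic $t^{-1}\|f\|_\infty$ bound in the argument above, and rerunning it produces the improved exponent $g(\gamma) := 2\alpha/(\alpha+2-\gamma)$. The fixed-point equation $\gamma = g(\gamma)$ has roots $\gamma = \alpha$ and $\gamma = 2$; since $g'(\alpha) = \alpha/2 < 1$, the value $\gamma = \alpha$ is attracting, and the iteration $\gamma_{n+1} = g(\gamma_n)$ starting from $\gamma_0$ converges to $\alpha$.

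\textbf{Main obstacle.} The critical difficulty is the last bootstrap step: each iteration only gives the strictly smaller exponent $\gamma_n < \alpha$, so one must pass to the limit both at the level of exponents and norms. The constants $C_n$ in $[f]_{C^{\gamma_n}_H}$ can a priori blow up, so the final argument must be done with a single self-improving inequality of the form $[f]_{C^\alpha_H} \le C(M + \text{lower-order terms})$ derived from the fixed-point analysis, rather than genuinely iterating a map of seminorms.
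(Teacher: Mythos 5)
Your part (i) is correct and is essentially the paper's own argument: the paper simply takes the infimum over all decompositions $u=a+b$ rather than selecting an $\varepsilon$-almost optimal one, but the estimate $|u(x+h)-u(x)|\le 2\|a\|_\infty+\|\nabla_Hb\|_\infty\|h\|_H\le 2K(\|h\|_H,u)$ is identical.

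Part (ii) is where there is a genuine gap, and it is the one you flag yourself: the bootstrap never reaches the exponent $\alpha$. Each pass through the Zygmund/optimization argument produces $f\in C^{\gamma_{n+1}}_H(X)$ with $\gamma_{n+1}=g(\gamma_n)<\alpha$, and the conclusion $f\in C^{\alpha}_H(X)$ requires a bound on $[f]_{C^{\gamma_n}_H(X)}$ that is \emph{uniform} in $n$ (one could then let $n\to\infty$ pointwise in $x,h$, since $\|h\|_H^{\gamma_n}\to\|h\|_H^{\alpha}$). You do not supply that uniform bound, so the proof as written stops strictly below $\alpha$. The gap is in principle repairable --- the recursion for the seminorms has the shape $A_{g(\gamma)}\le C\,M^{1-g(\gamma)/2}\bigl(\|f\|_\infty+A_\gamma\bigr)^{g(\gamma)/2}$ with exponent $g(\gamma)/2\le\alpha/2<1$, and the constants (including the Zygmund-to-H\"older conversion constant, which stays bounded for $\gamma\le\alpha<1$) are uniform on $[\gamma_0,\alpha]$, so the $A_{\gamma_n}$ do remain bounded --- but none of this is in your write-up. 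Two further ingredients are asserted without proof: the identity $T(s)b-b=\int_0^sT(\sigma)Lb\,d\sigma$ for $b\in D(L)$ is not available off the shelf here, because $T(t)$ is not strongly continuous on $C_b(X)$ and $L$ is defined only through its resolvent (one has to justify it, e.g.\ by writing $b=R(\lambda,L)\varphi$ and computing from \eqref{risolvente}); and the implication ``Zygmund along $H$ with exponent $\gamma<1$ implies $C^{\gamma}_H$'' is a classical one-dimensional fact applied to $s\mapsto f(x+sh)$ that deserves at least a sentence.

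For comparison, the paper disposes of (ii) with a short interpolation argument that avoids the semigroup characterization and the bootstrap entirely: from \eqref{stimagradienteH}(i) and the resolvent representation one gets $\|\nabla_Hu\|_\infty\le c_0\Gamma(1/2)\bigl(\lambda^{1/2}\|u\|_\infty+\lambda^{-1/2}\|Lu\|_\infty\bigr)$ for every $\lambda>0$, hence, minimizing over $\lambda$, the multiplicative inequality $\sup_x\|\nabla_Hu(x)\|_H\le C\|u\|_\infty^{1/2}\|Lu\|_\infty^{1/2}$. This says exactly that $C^1_H(X)$ is of class $J_{1/2}$ between $C_b(X)$ and $D(L)$, so the Reiteration Theorem gives $(C_b(X),D(L))_{\alpha/2,\infty}\subset(C_b(X),C^1_H(X))_{\alpha,\infty}$, and (ii) follows from (i). If you want a proof that compiles into a complete argument with minimal effort, that is the route to take.
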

\begin{proof}
We recall that, given two Banach spaces $\mathcal Y\subset \mathcal X$ and $\alpha\in (0, 1)$, the interpolation space $(\mathcal X, \mathcal Y)_{\alpha, \infty}$ consists of all $u\in \mathcal X$ such that $ \|u\|_{ (\mathcal X, \mathcal Y)_{\alpha, \infty}} := \sup_{t>0} t^{-\theta} K(t, u)  <+\infty$, where 
$K(t,u):= \inf\{ \|a\|_{\mathcal X} + t\|b\|_{\mathcal Y}: \; u=a+b, \; a\in \mathcal X, \; b\in \mathcal Y\}$. We also recall that $(\mathcal X, \mathcal Y)_{\alpha, \infty}\subset \mathcal X $, with continuous embedding. 

Let $u\in (C_b(X), C^1_H(X))_{\alpha, \infty}$. For every decomposition $u= a+b$, with $a\in C_b(X)$, $b\in C^1_H(X)$, we have 
$$|u(x+h) - u(x)| \leq |a(x+h)-a(x)| + |b(x+h) - b(x)| \leq 2\|a\|_{\infty} + \|\nabla_Hb\|_{\infty} \|h\|_H, \quad x\in X, \; h\in H, $$
so that, taking the infimum over all such  decompositions, 
$$|u(x+h) - u(x)| \leq 2K(\|h\|_H, u ) \leq 2 \|h\|_H^{\alpha} \|u\|_{(C_b(X), C^1_H(X))_{\alpha, \infty}}, \quad x\in X, \; h\in H, $$
and (i) follows. 

To prove  statement (ii) we use \eqref{stimagradienteH}(i), that yields, for every $u\in D(L)$ and $\lambda >0$,  $x\in X$, 
$$\begin{array}{lll}
\| \nabla_H u(x) \|_H & \leq & \ds   \int_0^{\infty} e^{-\lambda t} \|\nabla_H T(t)(\lambda u - Lu)(x) \|_H dt 
\\
\\
& \leq  & \ds \int_0^{\infty} e^{-\lambda t} c(t)dt \, (\lambda \|u\|_{\infty } + \|Lu\|_{\infty}) 
\\
\\
&  \leq   &  c_0 \Gamma(1/2) (\lambda ^{1/2}\|u\|_{\infty } + \lambda ^{-1/2}\|Lu\|_{\infty}), 
\end{array}$$
where $c_0= \sup_{t>0} t^{1/2}c(t)$.  Taking the minimum over $\lambda$ we get 
$$ \sup_{x\in X} \| \nabla_H u(x) \|_H \leq C\|u\|_{\infty }^{1/2} \|Lu\|_{\infty}^{1/2}, $$
for some $C>0$, independent of $u$. This implies  that the space $C^1_H(X)$ belongs to the class $J_{1/2}$ between $C_b(X)$ and $D(L)$ (e.g., \cite[Sect. 1.10.1]{T}). The Reiteration Theorem  (\cite[Sect. 1.10.2]{T}) yields 
$$(C_b(X), D(L))_{\alpha/2, \infty} \subset (C_b(X); C^1_H(X))_{\alpha, \infty},  $$
and (ii) follows from (i). 
\end{proof}
 
Going back to \eqref{interp}, in the case where $X$ is a Hilbert space and $\gamma$ is  non-degenerate, the  similar equality
$$(BUC(X), BUC^1_H(X))_{\alpha, \infty} = C^{\alpha}_H(X)\cap BUC(X) $$
was stated  in \cite{CDP1}.

Concerning Schauder estimates in infinite dimension, if $X$ is an infinite dimensional Hilbert space, smoothing Ornstein-Uhlenbeck semigroups such as 
$$T(t)f(x) = \int_X f(e^{tA}x + y){\mathcal N}_{0,Q_t}(dy)$$
were considered in \cite{C,CDP}, under the assumptions that $A$ is the infinitesimal generator of a strongly continuous semigroup $e^{tA}$ in $X$, $Q\in {\mathcal L}(X)$ is a self-adjoint positive operator, the operators $Q_t:= \int_0^t e^{sA}Qe^{sA^*}ds $ have finite trace for every $t>0$, $e^{tA}(X)\subset Q_t^{1/2}(X)$ for every $t$, and moreover $\sup_{t>0}t^{1/2} \|Q_t^{-1/2}e^{tA}\|_{\mathcal L(X)}<+\infty$. The generator of $T(t)$ is a realization of the operator 
$$\mathcal L u(x)= \frac{1}{2}{\text Tr}(QD^2u(x)) + \langle x, A^*\nabla u(x)\rangle$$
and  $T(t)$ is a smoothing operator in all directions, not only along a subspace. In this case,  a Schauder theorem in the usual H\"older spaces holds: namely, if $f$ is any bounded function belonging to $C^{\alpha}(X)$ for some $\alpha\in (0, 1)$, then the function 
\begin{equation}
\label{1}
u(x) = \int_0^{\infty} e^{-\lambda t}T(t)f(x)dt
\end{equation}
 belongs to $C^2(X)$, it has  bounded first and second order derivatives,  $D^2u \in C^{\alpha}(X, {\mathcal L}^{(2)}(X))$.  This was proved in \cite{CDP} in the case $Q=I$ and in \cite[Ch. 5]{C} in the case that $T(t)$ is the transition semigroup of a suitable linear stochastic PDE with  $X= L^2(\Omega)$, $\Omega$ being an open bounded subset of $\R^d$ with smooth boundary.  

 We would like to remind that there are relevant situations in which Schauder estimates cannot be proved for Hilbert spaces, but only for Banach spaces. This is the case considered in \cite{cdp}, where the transition semigroup $T(t)$ associated with a class of stochastic reaction-diffusion equations defined on a bounded interval 
$[0,1]$,  with polynomially growing coefficients, is studied in the space $X=C([0,1])$. Actually, for that class of equations the analysis of $T(t)$  in $X=L^2(0,1)$ is considerably more delicate than in $X=C([0,1])$ and it is not possible to prove that when  $f \in\,C^\alpha(L^2(0,1))$, for some $\alpha \in\,(0,1)$, the function $u$ defined in \eqref{1} belongs to $C^2(L^2(0,1))$. Notice, in particular, that  working in $C([0,1])$ prevents from using the interpolatory identity \eqref{interp}.

 Under assumptions similar to \cite{CDP} a related result is in \cite{ABP}, where the space $L^{\infty}(X, \gamma)$   is considered instead of $C_b(X)$. Regularity results were stated in terms of the  spaces $\{ f\in L^{\infty}(X, \gamma):\; \sup_{t>0} t^{-\alpha/2}\|T(t)f-f\|_{\infty} <+\infty\}$, called  $S^{\alpha}$ and endowed with their natural norm
$$\|f\|_{\infty} +  \sup_{t>0} \frac{\|T(t)f -f\|_{\infty}}{t^{\alpha/2}} . $$
However, since $T(t)$ is strong Feller, we have $S^{\alpha}= (C_b(X), D(L))_{\alpha/2, \infty}$, with equivalence of the respective norms. 
In \cite{ABP} it is proved that if $f\in S^{\alpha}$, then $u$ and  its first and second order derivatives along any direction  belong to $S^{\alpha}$.

Schauder type theorems for the Gross Laplacian and of some of its perturbations were established in \cite{CDP1}. Here, the semigroup is given by 
$$S(t)f(x) = \int_X \varphi(x+\sqrt{t}y)\gamma(dy), $$
where $\gamma$ is again a centered non-degenerate Gaussian measure in a separable Hilbert space $X$. In contrast with Ornstein-Uhlenbeck semigroups, $S(t)$ is strongly continuous in $BUC(X)$; similarly to our Ornstein-Uhlenbeck semigroup $S(t)$ is not strong Feller, and it is smoothing only along the directions of the Cameron-Martin space. The infinitesimal generator of $S(t)$ in $BUC(X)$ is a realization $A$ of the operator 
$${\mathcal A}u(x) = \frac{1}{2}\text{Trace}\,D^2u(x), $$
in the space $BUC(X)$. 
A result similar to Theorem 1 was stated, when $f\in C^{\alpha}_H(X)\cap BUC(X) $ (the latter space is called $C^{\alpha}_Q(X)$ in \cite{CDP1}, $Q$ being the covariance of $\gamma$). Moreover, 
 in \cite{ABP} it was proved that if $f\in S^{\alpha}$, where now
 $$ S^{\alpha}: =  \{ g\in L^{\infty}(X):\; \sup_{t>0}  t^{-\alpha/2}\|S(t)g-g\|_{\infty} <+\infty  \}, $$
 then for every $\lambda >0$ the function $u(x)=\int_X e^{-\lambda t}S(t)f(x) ds$ possesses first and second order derivatives along the elements of any orthonormal basis of $X$ consisting of eigenvalues of $Q$, and they  belong to $ S^{\alpha} $. 
  
\section{Acknowledgements}

The first  author was partially supported the NSF Research Grant DMS-1712934 (2017-2020), ``Analysis of Stochastic Partial Differential Equations with Multiple Scales". The second author is a member of GNAMPA-INDAM, and was partly supported by the PRIN Grant 2015233N54 ``Deterministic and stochastic evolution equations". 



\begin{thebibliography}{99}

 \bibitem{ABP} {\sc S.A. Athreya, R.F. Bass, E.A. Perkins}, {\it H\"older norm estimates for elliptic operators on finite and infinite dimensional spaces}, Trans. Amer. Math. Soc. {\bf 357} (2005), 5001-5029. 

 \bibitem{Boga} {\sc V.I. Bogachev}, {\rm Gaussian Measures}, American Mathematical Society, 1998.

\bibitem{CDP}  {\sc P. Cannarsa, G. Da Prato}, {\it Schauder estimates for Kolmogorov equations in Hilbert spaces}, Progress in elliptic and parabolic partial differential equations (Capri, 1994), 100-111, Pitman Res. Notes Math. Ser., 350, Longman, Harlow, 1996. 

\bibitem{CDP1}  {\sc P. Cannarsa, G. Da Prato}, {\it Infinite-dimensional elliptic equations with H\"older-continuous coefficients}, Adv. Differential Equations {\bf 1} (1996),  425-452. 

\bibitem{C} {\sc S. Cerrai},  {\it Second Order PDEs in Finite and Infinite Dimension. A probabilistic approach}, Lecture Notes in Mathematics, 1762. Springer-Verlag, Berlin, 2001.

\bibitem{cdp} {\sc S. Cerrai, G. Da Prato},  {\it Schauder estimates for elliptic equations in Banach spaces associated with stochastic reaction-diffusion equations}, Journal of Evolution Equations {\bf 12} (2012), 83-98.

 \bibitem{DPL} {\sc G. Da Prato, A. Lunardi}, {\it On the Ornstein-Uhlenbeck operator in spaces of continuous functions}, J. Funct. Anal. {\bf 131} (1995), 94-114. 
  
 \bibitem{L} {\sc  A. Lunardi}, {\it  Schauder estimates for a class of parabolic operators with unbounded coefficients in
$\R^n$}, Ann. Scuola Norm. Sup. Pisa (4) {\bf 24} (1997), no. 1, 133Ð164.
  
  


 \bibitem{T} {\sc H. Triebel} {\it  Interpolation Theory, Function Spaces, Differential Operators}, North-Holland, Amsterdam, 1978. 

 \end{thebibliography}
\end{document}